\theoremstyle{plain}
\newtheorem{theorem}{Theorem}
\newtheorem{corollary}[theorem]{Corollary}
\newtheorem{lemma}[theorem]{Lemma}
\theoremstyle{definition}
\newtheorem{remark}[theorem]{Remark}
\newtheorem{example}[theorem]{Example}
\newtheorem{definition}[theorem]{Definition}
\newtheorem{convention}[theorem]{Convention}
\newcommand{\UU}{\mathcal{U}}
\newcommand{\refl}{\mathsf{refl}}
\newcommand{\ct}{%
  \mathchoice{\mathbin{\raisebox{0.5ex}{$\displaystyle\centerdot$}}}%
             {\mathbin{\raisebox{0.5ex}{$\centerdot$}}}%
             {\mathbin{\raisebox{0.25ex}{$\scriptstyle\,\centerdot\,$}}}%
             {\mathbin{\raisebox{0.1ex}{$\scriptscriptstyle\,\centerdot\,$}}}
}
\newcommand{\trunc}[2]{\mathopen{}\left\Vert #2\right\Vert_{#1}\mathclose{}}
\newcommand{\tproj}[3][]{\mathopen{}\left|#3\right|_{#2}^{#1}\mathclose{}}
\newcommand{\defeq}{\vcentcolon\equiv}
\newcommand{\glue}{\ensuremath{\mathsf{glue}}}
\newcommand{\inl}{\ensuremath{\mathsf{inl}}}
\newcommand{\inr}{\ensuremath{\mathsf{inr}}}
\newcommand{\bool}{\mathbf 2}
\newcommand{\N}{\mathbb N}
\newcommand{\List}{\mathsf{List}}
\newcommand{\unit}{\mathbf 1}
\newcommand{\quot}{A \! \sslash \! {\scriptstyle \sim}}
\newcommand{\squot}{A \slash {\scriptstyle \sim}}
\newcommand{\ap}{\ensuremath{\mathsf{ap}}}
\title[Coherence via Well-foundedness:]{Coherence via Well-foundedness: \\
Taming Set-Quotients in Homotopy Type Theory}\thanks{Funding Acknowledgment: This work was supported by The Royal Society, grant reference URF\textbackslash R1\textbackslash 191055.\\
\indent
This version of the paper is essentially identical to the final publication at \emph{Logic in Computer Science 2020 (LICS'20)}, \url{https://doi.org/10.1145/3373718.3394800}.
}
\author{Nicolai Kraus \and Jakob von Raumer}
\begin{document}

\begin{abstract}
Suppose we are given a graph and want to show a property for all its \emph{cycles} (closed chains).
Induction on the length of cycles does not work since sub-chains of a cycle are not necessarily closed.
This paper derives a principle reminiscent of induction for cycles for the case that the graph is given as the symmetric closure of a locally confluent and (co-)well-founded relation.
We show that, assuming the property in question is sufficiently nice, it is enough to prove it for the empty cycle and for cycles given by local confluence.

Our motivation and application is in the field of \emph{homotopy type theory}, which allows us to work with the higher-dimensional structures that appear in homotopy theory and in higher category theory, making \emph{coherence} a central issue.
This is in particular true for quotienting -- a natural operation which gives a new type for any binary relation on a type and, in order to be well-behaved, cuts off higher structure (\emph{set-truncates}). The latter makes it hard to characterise the type of maps from a quotient into a higher type, and several open problems stem from this difficulty.

We prove our theorem on cycles in a type-theoretic setting and use it to show coherence conditions necessary to eliminate from set-quotients into 1-types, deriving approximations to open problems on free groups and pushouts. We have formalised the main result in the proof assistant Lean.

\end{abstract}

\maketitle

\newcommand{\dirrel}{\leadsto}
\newcommand{\consPrime}{\colon \!\!\! \colon \!}
\newcommand{\cons}{\mathop{\consPrime}}
\newcommand{\consLong}{\mathsf{snoc}}

\newcommand{\rotate}{\mathsf{rot}}

\newcommand{\hcolim}{\mathsf{hcolim}}



\DeclareFontFamily{U}{wasysmall}{}
\DeclareFontShape{U}{wasysmall}{m}{n}{
  <-5.5> s*[1.4] wasy5
  <5.5-6.5> s*[1.4] wasy6
  <6.5-7.5> s*[1.4] wasy7
  <7.5-8.5> s*[1.4] wasy8
  <8.5-9.5> s*[1.4] wasy9 
  <9.5-> s*[1.4] wasy10
}{}
\newcommand{\biggerhexagon}{\text{\usefont{U}{wasysmall}{m}{n}\symbol{57}}}

\newcommand{\cycle}[1]{\biggerhexagon^{#1}}

\newcommand{\smallerhexagon}{\scalebox{0.5}{\biggerhexagon}}

\newcommand{\toleads}{\mathrel{\reflectbox{$\leadsto$}}}
\newcommand{\toleadsstar}{\; {}^* \!\! \toleads}

\newcommand{\spantocycle}{\mathfrak{L}}

\section{Introduction}

\emph{Homotopy type theory} \cite{hott-book}, commonly known as \emph{HoTT}, 
is a version of constructive dependent type theory inspired by the observation that types carry the structure of (higher) groupoids~\cite{conf/lics/HofmannS94,bg:type-wkom,lumsdaine:phd} and allow homotopical interpretations~\cite{awodeyWarren_HTmodelsOfIT,kap-lum:simplicial-model}.
The assumption of \emph{uniqueness of identity/equality proofs} (UIP), which says that every equality type (written as $a=b$) has at most one element, is rejected; instead, types satisfying this principle are known as \emph{sets} (or \emph{homotopy sets}, \emph{h-sets}).

This leads to new opportunities.
We can find ``higher'' types (i.e.\ ``non-sets''), whose equality structure may be very complicated.
At the same time, the familiar and desirable concept of quotienting \cite{hofmann:thesis} (from now on called \emph{set-quotienting} \cite[Chp 6.10]{hott-book}) becomes an instance of the more general idea of \emph{higher inductive types}.
With the help of these, theories of higher structures can be developed synthetically in type theory.
One difficulty of constructions is that it is usually not sufficient to know that two things are equal, but we need to know \emph{how} they are equal.
Families of equalities need to ``match'' or ``fit together'': they need to be \emph{coherent}.

The guiding motivation for the theory that we develop in this paper is the needed coherence that arises when working with set-quotients simultaneously with higher types.
Recall from \cite[Chp 6.10]{hott-book} that, for a given type $A : \UU$ and a relation\footnote{A \emph{relation} is simply a type family $A \to A \to \UU$ of which we do not assume any properties. In particular, $\sim$ does not need to be an equivalence relation for the quotient to exist. Some authors call such a relation \emph{proof-relevant} to emphasise that it is not necessarily valued in propositions.}
$(\sim) : A \to A \to \UU$, the set-quotient can be implemented as the higher inductive type
\begin{equation} \label{eq:setquotient}
\begin{aligned}
 & \text{inductive } \squot \text{ where} \\
 &\qquad \iota : A \to \squot \\
 &\qquad \glue : \Pi\{a,b : A\}. (a \sim b) \to \iota(a) = \iota(b) \\
 &\qquad \mathsf{trunc} : \Pi\{x,y : \squot\}. \Pi(p,q : x=y). p=q
\end{aligned}
\end{equation}
From this representation, we can derive the elimination rule:
In order to get a function $f : ({\squot}) \to X$, we need to give a function $g : A \to X$ such that, whenever $a \sim b$, we have $g(a) = g(b)$.
However, this only works if $X$ is a set itself.
If it is not, we have a priori no way of constructing the function $f$.

Let us look at one instance of the problem. We consider the following set-quotient, which we will use as a running example.
It is a standard construction that has been discussed in \cite[Chp 6.11]{hott-book}.
\begin{example}[free group] \label{ex:fg}
 Let $A$ be a set.
 We construct the free group on $A$ as a set-quotient.
 We consider lists over $A + A$, where we think of the left copy of $A$ as positive and the right copy as negative elements.
 For $x : A + A$, we write $x^{-1}$ for the ``inverted'' element: 
 \begin{equation}
  \inl(a)^{-1} \defeq \inr(a) \qquad \qquad \qquad \inr(a)^{-1} \defeq \inl(a)
 \end{equation}
 The binary relation $\sim$ on $\List(A+A)$ is generated by
 \begin{alignat}{3} \label{eq:fg-relation}
  & [\ldots, x_1, x_2, x_2^{-1}, x_3, \ldots] & \; &\sim & \;\; & [\ldots, x_1, x_3, \ldots]. 
 \end{alignat}
 Then, the set-quotient $\List(A+A) \slash \sim$ is the free group on $A$: It satisfies the correct universal property by \cite[Thm 6.11.7]{hott-book}.
\end{example}

Another way to construct the free group on $A$ is to re-use the natural groupoid structure that every type carries; this can be seen as a typical ``HoTT-style'' construction.
It works as follows.
The \emph{wedge of $A$-many circles} is the (homotopy) coequaliser of two copies of the map $A$ into the unit type, 
$\hcolim (A \rightrightarrows \unit)$.
Using a higher inductive type, it can be explicitly constructed:
 \begin{equation} \label{wedge}
 \begin{aligned}
  & \text{inductive } \hcolim (A \rightrightarrows \unit) : \UU \text{ where} \\
  & \qquad \mathsf{base}: \hcolim (A \rightrightarrows \unit) \\
  & \qquad \mathsf{loop}: A \to \mathsf{base} = \mathsf{base}
 \end{aligned}
\end{equation}
Its loop space $\Omega(\hcolim (A \rightrightarrows \unit))$ is by definition simply $\mathsf{base} = \mathsf{base}$.
This loop space carries the structure of a group in the obvious way: the neutral element is given by reflexivity, multiplication is given by path composition, symmetry by path reversal, and every $a:A$ gives rise to a group element $\mathsf{loop}(a)$.
This construction is completely reasonable without the assumption that $A$ is a set, and it therefore defines the free \emph{higher} group (cf. \cite{Kraus:free}); from now on, we write $F_A$ for it:
\begin{equation} \label{eq:FA-definition}
 F_A \defeq  \Omega(\hcolim (A \rightrightarrows \unit)) \text{.}
\end{equation}
In contrast to this observation, the set-quotient of \cref{ex:fg} 
ignores any existing higher structure (cf. \cite[Rem 6.11.8]{hott-book}) and thus really only defines the free ``ordinary'' group.
If we do start with a set $A$, it is a natural question whether the free higher group and the free group coincide:
There is a canonical function 
\begin{equation}\label{eq:higher-to-ordinary-fg}
 F_A \to (\List(A+A) \slash \sim),
\end{equation}
defined analogously to $\Omega(\mathsf{S}^1) \to \mathbb Z$, cf.\ \cite{hott-book}.
Classically, this function is an equivalence.
Constructively, it is an open problem to construct an inverse of \eqref{eq:higher-to-ordinary-fg}.

The difficulties do not stem from the first two constructors of the set-quotient.
Indeed, we have a canonical map
\begin{equation} \label{eq:omega1}
 \omega_1 : \List(A+A) \to F_A
\end{equation}
which maps a list such as $[\inl(a_1), \inr(a_2), \inl(a_3)]$ to the path composition $\mathsf{loop}(a_1) \ct (\mathsf{loop}(a_2))^{-1} \ct \mathsf{loop}(a_3)$.
For this map, we also have
\begin{equation} \label{eq:omega2}
 \omega_2 : \Pi(\ell_1,\ell_2 : \List(A+A)). (\ell_1 \sim \ell_2) \to \omega_1(\ell_1) = \omega_1(\ell_2)
\end{equation}
since consecutive inverse loops cancel each other out.
Therefore, if we define $(\quot)$ to be the higher inductive type \eqref{eq:setquotient} \emph{without} the constructor $\mathsf{trunc}$, i.e.\ the \emph{untruncated quotient} or \emph{coequaliser}, then there is a canonical map
\begin{equation} \label{eq:omega-complete}
 \omega : (\List(A+A) \sslash \sim) \to F_A.
\end{equation}
Thus, the difficulty with defining an inverse of \eqref{eq:higher-to-ordinary-fg} lies solely in the question whether $F_A$ is a set.
This is an open problem which has frequently been discussed in the HoTT community (a slight variation is recorded in \cite[Ex 8.2]{hott-book}).
It is well-known in the community how to circumvent the problem if $A$ has decidable equality.
However, the only piece of progress on the general question that we are aware of is the result in \cite{Kraus:free}, where it is shown that all fundamental groups \cite[Chp 6.11]{hott-book} are trivial.
In other words: Instead of showing that \emph{everything} above truncation level $0$ is trivial, the result shows that a single level is trivial.
The proof in \cite{Kraus:free} uses a rather intricate construction which is precisely tailored to the situation.

The construction of functions $({\squot}) \to X$ is the guiding motivation for the results that we present in this paper.
We do not allow an arbitrary type $X$, however; instead, we assume that $X$ is $1$-truncated.
As an application, we will give a new proof for the theorem that the fundamental groups of $F_A$ are trivial.
We will also show a family of similar statements, by proving a common generalisation.

The characterisation of the equality types of $({\squot})$ makes it necessary to consider \emph{cycles}, or \emph{closed zig-zags}, in $A$.
A cycle is simply an element of the symmetric-transitive closure, for example:

\newdimen\WF
\newdimen\WS

\iftoggle{arxiv}{
\WF=.475\textwidth
\WS=.475\textwidth
\begin{minipage}[t]{\WF}
 \begin{align*}
  & s : a \sim b \qquad p : d \sim c \\
  & t : c \sim b \qquad q : a \sim d
 \end{align*}
\end{minipage}%
}{
\WF=.2\textwidth
\WS=.25\textwidth
\begin{minipage}[t]{\WF}
 \begin{align*}
  & s : a \sim b \\
  & t : c \sim b \\
  & p : d \sim c \\
  & q : a \sim d
 \end{align*}
\end{minipage}%
}
%
\begin{minipage}[t]{\WS}
\begin{equation} \label{eq:cyclepicture}
\begin{tikzpicture}[x=1.5cm,y=-1.50cm,baseline=(current bounding box.center)]
  \tikzset{arrow/.style={shorten >=0.1cm,shorten <=.1cm,-latex}}
\node (A) at (0,0) {$a$}; 
\node (B) at (0,1) {$b$}; 
\node (C) at (1,1) {$c$}; 
\node (D) at (1,0) {$d$}; 

\draw[arrow] (A) to node [left] {$s$} (B);
\draw[arrow] (C) to node [below] {$t$} (B);
\draw[arrow] (D) to node [right] {$p$} (C);
\draw[arrow] (A) to node [above] {$q$} (D);
\end{tikzpicture}
\end{equation}
\end{minipage}
Our first new result (\cref{thm:gensetquotnew}) says:
We get a function $({\squot}) \to X$ into a 1-type $X$ if we have $f : A \to X$ and $h : (a \sim b) \to f(a) = f(b)$, together with the coherence condition stating that $h$ maps any cycle to a commuting cycle in $X$. In the case of the example \eqref{eq:cyclepicture} above, this means that the composition $h(s) \ct h(t)^{-1} \ct h(p)^{-1} \ct h(q)^{-1}$ equals $\refl_{f(a)}$.
\cref{thm:gensetquotnew} is fairly simple, and we do not consider it a major contribution of this paper.
The actual contribution of the paper is to make \cref{thm:gensetquotnew} usable
since, on its own, it is virtually impossible to apply in any non-trivial situation.
The reason for this is that the coherence condition talks about cycles, i.e.\ \emph{closed} zig-zags.
Zig-zags are inductively generated (they are simply a chain of segments), but closed zig-zags are not.
If we have a property on cycles, which we cannot generalise to arbitrary zig-zags, then there is no obvious inductive strategy to show the property for all cycles: if we remove a segment of a cycle, the remaining zig-zag is not closed any more.
In all our examples, it seems not possible to formulate an induction hypothesis based on not-necessarily-closed zig-zags.

Although the function space $({\squot}) \to X$ \emph{in homotopy type theory} is the guiding motivation of this paper, the actual main contribution and heart of the paper is completely independent of type theory and could be formulated in almost any (constructive) foundation.
We start from a Noetherian (co-well-founded) and locally confluent binary relation $\leadsto$ on $A$ (conditions that are satisfied in all our examples).
We then construct a new Noetherian binary relation $\leadsto^{\smallerhexagon}$ on the cycles of $\leadsto$.
An important property of this new relation is that any cycle can be written as the ``merge'' of a smaller cycle and one which is given by the condition of local confluence.
This can be seen as a strengthening of Newman's Lemma, and vaguely corresponds to an observation in Neuman's original work that cycles can be decomposed using local confluence \cite{newman1942theories}.
The concrete consequence is \cref{thm:cycle-ind}, which we call \emph{Noetherian cycle induction}:
Assume we are given a property $P$ on cycles which respects merging cycles as well as ``rotating'' cycles.
Then, we can show $P$ for all cycles by showing it for the empty cycle(s) and for all cycles that are given by the local confluence property.

While it is very hard to show a property directly for \emph{all} cycles, it is much more manageable to show the property for those cycles that stem from local confluence.
In any given example, when we prove local confluence, we can \emph{choose} how these ``confluence cycles'' look, giving us full control over what we have to prove.

Let us get back to homotopy type theory.
The combination of the two mentioned results (\cref{thm:gensetquotnew} and \cref{thm:cycle-ind}) gives us \cref{thm:dirsetquot}: Given a 1-type $X$ and $f : A \to X$ such that $a \sim b$ implies $f(a) = f(b)$, it suffices to show that ``confluence cycles'' are mapped to trivial equality proofs.
We apply this to show that the free higher group over a set has trivial fundamental groups.
There is a family of similar statements that we also discuss and prove.

\paragraph*{Outline of the paper.}

We start by clarifying the setting and recalling previous results that we build upon in \cref{sec:prelims}.
We then show the mentioned \cref{thm:gensetquotnew} describing functions out of the set-quotient into $1$-types in \cref{sec:elim-out-of-quots}.
In \cref{sec:red-schemes}, we identify the properties of the relations in question, namely being Noetherian and local confluence.
\cref{sec:order-cycles} is the core of the paper: Starting from a Noetherian locally confluent relation on $A$, we construct a Noetherian relation on the cycles of $A$.
This allows us to formulate the principle of Noetherian cycle induction in \cref{sec:fam-of-cycles}.
In \cref{sec:apps}, we show several applications.
\cref{sec:concl} is reserved for some concluding remarks.

\paragraph*{Formalisation.}

We have formalised the complete construction with all proofs in \cref{sec:prelims,sec:elim-out-of-quots,sec:red-schemes,sec:order-cycles,sec:fam-of-cycles} in the proof assistant Lean~\cite{moura:lean},
including the existing results that we recall in \cref{sec:prelims}.
Parts of \cref{sec:apps} are not self-contained and have not been included in the formalisation, but the important
\cref{thm:dirsetquot} is formalised.
The formalisation was done for the third major version of Lean, which supports
reasoning in HoTT by enforcing that its strict, impredicative universe of propositions
is avoided in all definitions.
It relies on a port of the Lean HoTT library~\cite{vandoorn:lean} which was developed for Lean 2.
The Lean code is available online.\footnote%
{The online repository can be found at \url{https://gitlab.com/fplab/freealgstr}.}

The formalisation approximately follows the structure of the paper.
Many arguments are directly translated from the informal mathematical style of the paper into Lean code.
In some cases, the formalisation uses shortcuts compared to the paper.
These shortcuts would look ad-hoc and unnecessary on paper, where certain overhead is invisible anyway.
However, avoiding this overhead simplifies the formalisation.

While homotopy type theory often allows very short formalisations of ``homotopical'' arguments (such as \cref{thm:gensetquotnew}), combinatorial arguments (such as the ones leading to \cref{thm:cycle-ind}) are more tedious in Lean than they are on paper.
It is difficult to say how big the formalisation exactly is due to the use of existing results, some of which are available in libraries while others are not.
Only counting the core results, the Lean formalisation comprises approximately 1.600 lines of code.

\section{Preliminaries and Prerequisites} \label{sec:prelims}

\subsection{Setting}

We work in the version of homotopy type theory that is developed in the ``HoTT book''~\cite{hott-book}.
This means that we use a dependent type theory with $\Sigma$- and $\Pi$-types as well as coproducts, with univalent universes and function extensionality.
We use inductive families \cite{dybjer1994inductive} as for example in \eqref{eq:refl-trans-rel-def} below.%
\footnote{It is known (cf.\ \cite{ambrusJakob:families}) that any development involving inductive families can be expressed with only ordinary inductive types.}
Further, we use curly brackets to denote implicit arguments as it is done in various proof assistants, for example as in $f : \Pi\{a:A\}. (B(a) \to C(a))$.
This allows us to omit the implicit argument and write $f(b) : C(b)$ in situations where $a$ can easily be inferred. 
The only higher inductive types that are relevant for this paper besides set-quotients are coequalisers
and set-truncations. They are briefly introduced in \cref{subsec:hits} below.
(For the advanced application given by \cref{thm:pushout-1-type}, pushouts \cite[Chp 6.8]{hott-book} are used.)

\subsection{Relations and Closures} \label{subsec:relations}

As usual, we have the following closure operations:
\begin{definition}[closures] \label{def:closures}
 Given a relation $(\sim) : A \to A \to \UU$, we write:
 \begin{itemize}
  \item $\sim^+$ for its transitive closure,
  \item $\sim^*$ for its reflexive-transitive closure,
  \item $\sim^s$ for its symmetric closure,
  \item $\sim^{s*}$ for its reflexive-symmetric-transitive closure.
 \end{itemize}
\end{definition}

The type-theoretic implementation of these closures is standard.
The reflexive-transitive closure is constructed as an inductive family with two constructors:
\begin{equation} \label{eq:refl-trans-rel-def}
 \begin{aligned}
  & \text{inductive } (\sim^{*}) : \, A \to A \to \UU \text{ where} \\
  & \qquad \mathsf{nil}: \Pi \{a:A\}. (a \sim^* a) \\
  & \qquad \consLong: \Pi\{a,b,c:A\}. (a \sim^* b) \to (b \sim c) \to (a \sim^* c)
 \end{aligned}
\end{equation}
The symmetric closure is the obvious disjoint sum,
\begin{equation} \label{eq:sym-rel-def}
 (a \sim^s b) \defeq (a \sim b) + (b \sim a).
\end{equation}
The transitive closure can be defined analogously to \eqref{eq:refl-trans-rel-def}, with the constructor $\mathsf{nil}$ replaced by a constructor of the form $(a \sim b) \to (a \sim^+ b)$.
Note that the relation $\sim$ is not required to be valued in propositions or even sets and neither are any of its closures.
As a consequence, we have functions $(a \sim^* b) \to (a \sim^{**} b)$ and $(a \sim^{**} b) \to (a \sim^* b)$, but we do in general not have $(a \sim^* b) = (a \sim^{**} b)$.
The analogous caveat holds for the other closure operations.

\begin{remark} \label{rem:nest-relation-constructions}
 We often want to nest several constructions.
 If we start with the relation $\sim$ and first apply construction $x$, then $y$, then $z$, we denote the resulting relation by $\sim^{xyz}$.
 For example, the reflexive-symmetric-transitive closure $\sim^{s*}$ is constructed by first taking the symmetric and then the reflexive-transitive closure.

 Needless to say, there are several possible variations of how such concepts can be implemented; for example, $\sim^*$ could be defined as $(x \sim^+ y) + (x = y)$. 
 The definitions we give are the ones which we found lead to the best computational behaviour in a proof assistant.
\end{remark}

\begin{definition}[chains and cycles] \label{def:chains-cycles}
 Given a relation $\sim$, we refer to elements of $(a \sim^{s*} b)$ as \emph{chains (from $a$ to $b$)}.
 A chain is \emph{monotone} if either each segment comes from the left summand in \eqref{eq:sym-rel-def}, i.e.\ is of the form $\mathsf{inl}(t)$ with $t : a \sim b$ (no segment is inverted), or if segment comes from the right summand (every segment is inverted).

 An element of $(a \sim^{s*} a)$ is called a \emph{cycle (based at $a$)}.
 We write $\cycle \sim$ for the type of cycles,
 \begin{equation}
  \cycle \sim \defeq \Sigma(a : A). (a \sim^{s*} a).
 \end{equation}
\end{definition}

\cref{fig:chains-and-cycles} below illustrates these definitions, where $a \to b$ denotes $a \sim b$.
What we call \emph{chain} is a $\sim$-\emph{conversion} from the rewriting perspective \cite{CR1936}, and \emph{cycles} are similar to the \emph{diagrams} of \cite{vincent-16}. 

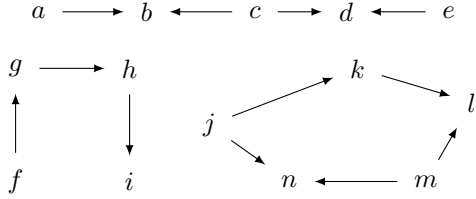
\begin{figure}[h] 
\begin{center}
\begin{tikzpicture}[x=1.5cm,y=-1.50cm,baseline=(current bounding box.center)]
  \tikzset{arrow/.style={shorten >=0.1cm,shorten <=.1cm,-latex}}

\node (P1) at (0.2,0) {$a$}; 
\node (P2) at (1.15,0) {$b$}; 
\node (P3) at (2.1,0) {$c$}; 
\node (P4) at (2.9,0) {$d$}; 
\node (P5) at (3.8,0) {$e$}; 

\draw[arrow] (P1) to node [below] {} (P2);
\draw[arrow] (P3) to node [below] {} (P2);
\draw[arrow] (P3) to node [left] {} (P4);
\draw[arrow] (P5) to node [left] {} (P4);

\node (Q1) at (0,1.5) {$f$};
\node (Q2) at (0,.5) {$g$};
\node (Q3) at (1,.5) {$h$};
\node (Q4) at (1,1.5) {$i$};

\draw[arrow] (Q1) to node [left] {} (Q2);
\draw[arrow] (Q2) to node [left] {} (Q3);
\draw[arrow] (Q3) to node [left] {} (Q4);

\node (R1) at (1.7,1) {$j$};
\node (R2) at (3,.5) {$k$};
\node (R3) at (4,.8) {$l$};
\node (R4) at (3.6,1.5) {$m$};
\node (R5) at (2.4,1.5) {$n$};

\draw[arrow] (R1) to node [left] {} (R2);
\draw[arrow] (R2) to node [left] {} (R3);
\draw[arrow] (R4) to node [left] {} (R3);
\draw[arrow] (R4) to node [left] {} (R5);
\draw[arrow] (R1) to node [left] {} (R5);
\end{tikzpicture}
\end{center}
\caption{A chain from $a$ to $e$, a monotone chain from $f$ to $i$, and a cycle. We sometimes call the elements of $A$ \emph{vertices}.}\label{fig:chains-and-cycles}
\end{figure}

\begin{definition}[notation and operations on closures]
 We have the following standard operations:
 \begin{enumerate}
  \item \emph{reflexivity:} We write $\epsilon : a \sim^* a$ for the constructor $\mathsf{nil}$.
  We also write $\epsilon$ for the trivial cycle, and $\epsilon_a$ if we want to emphasise that we mean the trivial cycle based at $a$.
  \item \emph{transitivity:} Given $\alpha : a \sim^* b$ and $\beta : b \sim^* c$, we notate their concatenation by $(\alpha \cons \beta) : a \sim^* c$. Overloading the symbol $\cons$, we also use it instead of the constructor $\consLong$.
  \item \emph{symmetry:} We can invert $\alpha : a \sim^{s*} b$ to get $\alpha^{-1} : b \sim^{s*} a$.
  This is done by inverting each single segment and reversing the order of all segments.
 \end{enumerate}
\end{definition}

\begin{definition}[empty chains and cycles]
 There is an obvious function
 \begin{equation}
  \mathsf{length}: \Pi\{a,b : A\}. (a \sim^{s*} b) \to \N
 \end{equation}
 which calculates the length of a chain.
 We say that a chain $\alpha$ is \emph{empty} if its length is zero.
 For the type of \emph{empty cycles}, we write
 \begin{equation}
  \biggerhexagon^\sim_\emptyset \defeq \Sigma(\alpha : \cycle \sim). \mathsf{length}(\alpha) = 0.
 \end{equation}
\end{definition}

\begin{remark} \label{rem:empty-versus-trivial}
 The trivial cycle $\epsilon$ is always empty, but not every empty cycle is equal to $\epsilon$.
 Instead, an empty cycle corresponds to a loop in $A$,
 \begin{equation}
  (\Sigma(a:A).(a = a)) \simeq \biggerhexagon^\sim_\emptyset.
 \end{equation}
 This is easy to see, since \eqref{eq:refl-trans-rel-def} without the second constructor is the usual definition of Martin-L\"of's identity type as an inductive family.
 In particular, if $A$ is a set, then $\epsilon$ \emph{is} the only empty cycle.
\end{remark}

\subsection{Quotients, Coequalisers, Truncations} \label{subsec:hits}

As explained in the introduction, the \emph{set-quotient} ${\squot}$ is the higher inductive type with constructors $\iota$, $\glue$, and $\mathsf{trunc}$, see \eqref{eq:setquotient}.
The construction can be split into two steps.
We write $({\quot})$ for the \emph{untruncated quotient} or \emph{coequaliser}
which has only the constructors $\iota$ and $\glue$,
and we write $\trunc 0 -$ for the \emph{set-truncation} which has only the constructors $\iota$ and $\mathsf{trunc}$.
\begin{lemma} \label{lem:simu-is-consec}
 For a relation $\sim$ on $A$, we have
 \begin{equation}
  ({\squot}) \simeq \trunc 0 {\quot}.
 \end{equation}
\end{lemma}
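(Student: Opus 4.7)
The plan is to define maps in both directions using the universal properties of the two higher inductive types, and then verify they are mutually inverse using the induction principles. Both sides are sets (the left-hand side by $\mathsf{trunc}$, the right-hand side by construction of $\trunc{0}{-}$), so all coherence obligations at higher dimensions collapse.

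First I would construct $f : ({\squot}) \to \trunc{0}{\quot}$ by induction on the set-quotient. The candidate is $f(\iota(a)) \defeq \bproj{\iota(a)}$, where $\bproj{-}$ denotes the set-truncation projection and the inner $\iota$ is the constructor of $\quot$. The $\glue$ case is handled by $\ap_{\bproj{-}}$ applied to the $\glue$-constructor of the untruncated quotient, and the $\mathsf{trunc}$ condition is automatic because $\trunc{0}{\quot}$ is a set. In the opposite direction, I would first build $g' : ({\quot}) \to ({\squot})$ by sending $\iota(a)$ to $\iota(a)$ and $\glue(r)$ to $\glue(r)$; this uses only the first two constructors of $\squot$, so the universal property of the coequaliser applies directly. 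Since $\squot$ is a set, $g'$ lifts through the truncation to give $g : \trunc{0}{\quot} \to ({\squot})$ with $g(\bproj{x}) = g'(x)$.

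Next I would check that $f$ and $g$ are mutually inverse. For $g \circ f = \id_{\squot}$, I use the induction principle of the set-quotient: on $\iota(a)$ both sides compute to $\iota(a)$; on $\glue(r)$ one chases the definitions to see that both sides equal $\glue(r)$; and the $\mathsf{trunc}$ obligation is vacuous because $\squot$ is a set, so any square of equalities commutes. For $f \circ g = \id_{\trunc{0}{\quot}}$, the truncation universal property reduces the statement to proving $f(g(\bproj{x})) = \bproj{x}$ for all $x : ({\quot})$; by induction on the coequaliser this reduces to the $\iota$ and $\glue$ cases, both of which hold by definitional unfolding (using $\ap$-computation for $\glue$), while any higher coherence is automatic since $\trunc{0}{\quot}$ is a set.

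I do not expect any serious obstacle. The only mildly delicate point is keeping track of the three different higher inductive types involved ($\squot$, $\quot$, and $\trunc{0}{-}$) and matching the $\glue$-computation rules across the two directions; in a formal development this amounts to chasing a few $\ap$-naturality squares, but on paper it suffices to observe that both endpoint types are $0$-truncated, which renders all such squares automatic.
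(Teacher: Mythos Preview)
Your proposal is correct and follows exactly the approach the paper indicates: the paper's proof is simply ``The direct approach of constructing functions back and forth works without difficulties,'' and your write-up is precisely that direct approach spelled out in detail. There is no meaningful difference.
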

\begin{proof}
 The direct approach of constructing functions back and forth works without difficulties.
\end{proof}
For a given type $X$, there is a canonical function from the function type $(\quot) \to X$ to the $\Sigma$-type of pairs $(f,h)$, where
\begin{align}
 & f : A \to X \label{eq:fAX}\text{,}\\
 & h : \Pi\{a, b : A\}. (a \sim b) \to f(a) = f(b). \label{eq:hAX}
\end{align}
This map is given by:
\begin{equation} \label{eq:canonicalmap}
 g \mapsto (g \circ [-], \ap_g \circ \glue).
\end{equation}
The universal property of the higher inductive type $\quot$ tells us that this function is an equivalence (one can of course also show this with the dependent elimination principle of $\quot$, if that is assumed instead as primitive).


\subsection{Path Spaces of Coequalisers}

We will need to prove statements about equalities in coequalisers.
For this, we use the following result which characterises the path spaces of $(\quot)$: 

\begin{theorem}[induction for coequaliser equality,~\cite{KrausVonRaumer_pathSpaces}] 
\label{thm:lics2019-main}
 Let a relation $(\sim) : A \to A \to \UU$ as before and a point $a_0: A$ be given.
 Assume we further have a type family
 \begin{equation} \label{eq:mainresult-based-P}
 P: \Pi \{b : A\}.(\iota(a_0) =_{\quot} \iota(b)) \to \UU
 \end{equation}
 together with terms
  \begin{align}
   & r : P(\refl_{\iota(a_0)})\text{,} \\
   & e : \Pi\{b,c : A\}, (q: \iota(a_0) = \iota(b)), (s : b \sim c). \nonumber \\
   & \phantom{e : \Pi \{ } P(q) \simeq P(q \ct \glue(s))\text{.}
  \end{align}
 Then, we can construct a term
 \begin{equation}
  \mathsf{ind}_{r,e} : \Pi\{b: A\},(q : \iota(a_0) = \iota(b)). P(q)
 \end{equation}
 with the following $\beta$-rules:
 \begin{align}
  & \mathsf{ind}_{r,e} (\refl_{\iota(a_0)}) = r \label{eq:thm-based-first-beta}\text{,} \\
  & \mathsf{ind}_{r,e}(q \ct \glue(s)) = e (q,s, \mathsf{ind}_{r,e}(q))\text{.} \label{eq:thm-based-second-beta}
 \end{align}
 \qed
\end{theorem}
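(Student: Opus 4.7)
The plan is to reduce the based induction principle for paths in $A\sslash\sim$ to singleton contraction of the based path space $\Sigma(x : A\sslash\sim).(\iota(a_0)=x)$. To do so, I would first repackage the given data $(P,r,e)$ into a single dependent type family $\widetilde P : \Pi(x : A\sslash\sim).(\iota(a_0)=x)\to\UU$ with a designated point at $(\iota(a_0),\refl)$, and then use contractibility of the based path space to propagate that point to every $(x,q)$ via transport.

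To construct $\widetilde P$, I would use coequaliser induction on $x$. For the point constructor, set $\widetilde P(\iota(b))(q) :\equiv P(q)$. For the path constructor, along $\glue(s) : \iota(b)=\iota(c)$ with $s:b\sim c$, I would need to produce a dependent equality between the two fibre functions. By function extensionality, this amounts to a pointwise family of equalities of types; unfolding how transport in the variable $x$ interacts with an argument $q : \iota(a_0) = x$, the required equation becomes $P(q) = P(q \ct \glue(s))$ for each $q$, which is supplied by $e$ via univalence.

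With $\widetilde P$ in hand, the type $\Sigma(x : A\sslash\sim).(\iota(a_0) = x)$ is contractible with centre $(\iota(a_0),\refl)$, so transporting $r$ along the canonical contraction yields, for each $(b,q)$, an element of $\widetilde P(\iota(b))(q) \equiv P(q)$. This defines $\mathsf{ind}_{r,e}$. The first $\beta$-rule holds because transport along the trivial contraction path is the identity. The second follows by decomposing the contraction path to $(\iota(c), q \ct \glue(s))$ as the path to $(\iota(b),q)$ followed by the lift of $\glue(s)$; the transport corresponding to the latter coincides, by construction of $\widetilde P$ over $\glue$, with the application of $e$.

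The main obstacle I expect lies in the construction of $\widetilde P$: converting the pointwise equivalence $e$ into the coherence datum required by coequaliser induction on a family of function types $(\iota(a_0)=x)\to\UU$ is delicate, requiring both function extensionality and univalence, and the computation of transport for such a type family must be tracked carefully. A consequence is that the second $\beta$-rule \eqref{eq:thm-based-second-beta} typically holds only as a propositional equality, so the last piece of work is to unwind the coherences accumulated from these applications into the clean form stated in the theorem.
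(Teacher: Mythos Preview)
The paper does not prove this theorem: it is quoted as a prerequisite from \cite{KrausVonRaumer_pathSpaces} and closed with a bare \qed. Your proposal is nonetheless the correct strategy and is essentially the one used in that reference. The two key ingredients you identify --- extending $P$ to a family $\widetilde P$ over \emph{all} of $\quot$ via the elimination principle of the coequaliser (using univalence on the equivalences $e$ for the $\glue$ case), and then invoking contractibility of the based path space $\Sigma(x:\quot).(\iota(a_0)=x)$ to transport $r$ --- are exactly the method of \cite{KrausVonRaumer_pathSpaces}. Your caveats are also accurate: the $\glue$ case requires carefully computing transport in the family $x \mapsto ((\iota(a_0)=x)\to\UU)$, and the second $\beta$-rule is only propositional, with the residual coherence coming from the round-trip through univalence and function extensionality.
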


\subsection{Functions out of Set-Truncations}

For types $A$ and $B$, we have a canonical function
\begin{equation} \label{eq:comp-with-tproj}
 (\trunc 0 A \to B) \to (A \to B)
\end{equation}
which is given by precomposition with $\tproj 0 -$.
Any such function $g \circ \tproj 0 -$ is moreover constant on loop spaces in the sense that
\begin{equation}
 \ap_{g \circ \tproj 0 -} : (a = a) \to (g(a) = g(a))
\end{equation}
satisfies $\ap_{g \circ \tproj 0 -}(p) = \refl$, for all $a$ and $p$.
For a $1$-truncated type $B$, the following known result by Capriotti, Kraus, and Vezzosi states that this property is all one needs to reverse \eqref{eq:comp-with-tproj}:

\begin{theorem}[\cite{capKraVez_elimTruncs}] \label{lem:CSL}
 Let $A$ be a type and $B$ be a 1-truncated type.
 The canonical function from $(\trunc 0 A \to B)$ to the type 
 \begin{equation}
  \Sigma(f : A \to B). \Pi(a:A),(p : a=a). \ap_f(p) = \refl
 \end{equation}
 is an equivalence. \qed
\end{theorem}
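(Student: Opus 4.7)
The plan is to construct an explicit inverse to the canonical map $\Phi$, which sends $g : \trunc{0}{A} \to B$ to $(g \circ \tproj{0}{-}, \sigma_g)$. The second component $\sigma_g$ is routine: for any loop $p : a = a$, the path $\ap_{\tproj{0}{-}}(p)$ lies in the set $\trunc{0}{A}$, hence is equal to $\refl$, so its image under $\ap_g$ is also $\refl$.

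For the inverse, given $(f, c)$, I would factor $f$ via the $1$-truncation. Because $B$ is $1$-truncated, the universal property of $\trunc{1}{-}$ yields a unique extension $\hat f : \trunc{1}{A} \to B$ with $\hat f \circ \tproj{1}{-} = f$. Using the standard equivalence $\trunc{0}{A} \simeq \trunc{0}{\trunc{1}{A}}$, it then suffices to further factor $\hat f$ through the $0$-truncation of its domain. For a map from one $1$-type to another, such a factoring exists precisely when its action on loops is trivial; concretely, the remaining goal is to produce, for every $x : \trunc{1}{A}$ and every $\ell : x = x$, a proof that $\ap_{\hat f}(\ell) = \refl$.

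The main obstacle is verifying this loop-triviality. The statement $\ap_{\hat f}(\ell) = \refl$ is an equality in the set $\hat f(x) = \hat f(x)$, hence a mere proposition, so I may assume $x = \tproj{1}{a}$ for some $a : A$. The path-space characterisation of the $1$-truncation gives an equivalence $(\tproj{1}{a} = \tproj{1}{a}) \simeq \trunc{0}{a = a}$, under which $\ap_{\hat f}$ corresponds to the set-truncation of $\ap_f$. The hypothesis $c$ says precisely that $\ap_f$ sends every loop in $a = a$ to $\refl$, and since the codomain $f(a) = f(a)$ is a set this descends to the set-truncation, yielding $\ap_{\hat f}(\ell) = \refl$ as required.

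Finally, verifying that the two constructions are mutually inverse is straightforward. Starting from $g : \trunc{0}{A} \to B$, the inverse produces a map $g'$ satisfying $g' \circ \tproj{0}{-} = g \circ \tproj{0}{-}$; the uniqueness clause of the intermediate extension to $\trunc{1}{A}$ then forces $g' = g$. In the other direction, starting from $(f, c)$, the resulting $g'$ has $g' \circ \tproj{0}{-} = f$ by construction, and the second component of the pair is automatic because it lives in a proposition (equality in the set $f(a) = f(a)$).
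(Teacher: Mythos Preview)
The paper does not supply its own proof of this statement: it is quoted from \cite{capKraVez_elimTruncs} and closed with a bare \qed. There is therefore no in-paper argument to compare your proposal against; what follows is an assessment of your sketch on its own merits.

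Most of your outline is sound. The passage through $\trunc{1}{A}$ is a natural move, and your verification that $\hat f : \trunc{1}{A} \to B$ annihilates every loop --- using the identification $(\tproj{1}{a} = \tproj{1}{a}) \simeq \trunc{0}{a = a}$, the fact that $f(a)=f(a)$ is a set, and the hypothesis $c$ to descend $\ap_f$ --- is correct and is genuinely the core computation.

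There is, however, a circularity at the step where you write ``for a map from one $1$-type to another, such a factoring exists precisely when its action on loops is trivial''. You invoke this as a known principle in order to factor $\hat f$ through $\trunc{0}{\trunc{1}{A}}$, but that principle is exactly the theorem under discussion, merely specialised to the case where the domain is already a $1$-type. Your reduction therefore shows only that the general statement follows from the $1$-type--domain instance of itself, and you give no independent argument for the latter. Note that the universal property of $\trunc{0}{-}$ speaks only of maps into \emph{sets}; since $B$ is merely a $1$-type, loop-triviality of $\hat f$ does not by itself yield the desired extension along $\tproj{0}{-}$. Closing this gap is precisely the non-trivial content of the cited result, and some further argument (as carried out in \cite{capKraVez_elimTruncs}) is required.
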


\section{Eliminating out of Set-Quotients} \label{sec:elim-out-of-quots}

As before, let $\sim$ be a relation on $A$.
Assume further that given are a function $f : A \to X$ and a proof $h$ that $f$ sends related points to equal points, as in \eqref{eq:fAX} and \eqref{eq:hAX}.
There is an obvious function
\begin{equation}
 h^{s*} : \Pi\{a,b : A\}. (a \sim^{s*} b) \to f(a) = f(b),
\end{equation}
defined by recursion on $a \sim^{s*} b$ which in each step path-composes with an equality given by $h$ or the inverse of such an equality.
Given $(f,h)$ and a third map $k : X \to Y$, it is easy to prove by induction on $a \sim^{*} b$ that we have
\begin{equation} \label{eq:star-shifting}
 \ap_k \circ h^{s*} = (\ap_k \circ h)^{s*}.
\end{equation}
We also note that, for chains $\alpha, \beta$,
\begin{align}
 & h^{s*}(\alpha :: \beta) = h^{s*}(\alpha) \ct h^{s*}(\beta)\label{eq:star-inverting} \text{ and}\\
 & h^{s*}(\alpha^{-1}) = (h^{s*}(\alpha))^{-1}\label{eq:star-functoriality}\text{.}
\end{align}
Of particular interest is the function $\glue^{s*} : \Pi\{a,b : A\}. (a \sim^{s*} b) \to \iota(a) = \iota(b)$.
It is in general not an equivalence: For example, for $t : a \sim a$, the chain $t \cons t^{-1}$  and the empty chain both get mapped to $\refl$.
Thus, $\glue^{s*}$ does not preserve inequality (but see \cref{rem:empty-versus-trivial}).
However, we have the following result:
\begin{lemma}
 The function $\glue^{s*} : (a \sim^{s*} b) \to \iota(a) = \iota(b)$ is surjective.
\end{lemma}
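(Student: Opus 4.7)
The plan is to apply \cref{thm:lics2019-main} to a carefully chosen family of propositions. Fix $a : A$. For every $b : A$ and every $p : \iota(a) = \iota(b)$, I want to show the proposition
$$P(p) \defeq \brck{\Sigma(\alpha : a \sim^{s*} b). \glue^{s*}(\alpha) = p}$$
is inhabited; this is exactly the surjectivity statement (quantified over $a$). Since $P$ is valued in propositions, the hypotheses of \cref{thm:lics2019-main} simplify nicely.

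For the base case $r : P(\refl_{\iota(a)})$, I take $\bproj{(\epsilon, \refl)}$: the empty chain is mapped by $\glue^{s*}$ to $\refl$ by definition. For the step, given $b, c : A$, $q : \iota(a) = \iota(b)$, and $s : b \sim c$, I need an equivalence
$$e : P(q) \simeq P(q \ct \glue(s)),$$
for which it suffices (since both sides are propositions) to provide a logical equivalence. Left to right: given $\alpha : a \sim^{s*} b$ with $\glue^{s*}(\alpha) = q$, extend to $\alpha \cons \inl(s) : a \sim^{s*} c$; by \eqref{eq:star-inverting}, $\glue^{s*}(\alpha \cons \inl(s)) = q \ct \glue(s)$. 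Right to left: given $\beta : a \sim^{s*} c$ with $\glue^{s*}(\beta) = q \ct \glue(s)$, extend to $\beta \cons \inr(s) : a \sim^{s*} b$; by the definition of $\glue^{s*}$, a single reversed segment is sent to $\glue(s)^{-1}$, so $\glue^{s*}(\beta \cons \inr(s)) = (q \ct \glue(s)) \ct \glue(s)^{-1} = q$ after cancellation.

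Applying $\mathsf{ind}_{r,e}$ of \cref{thm:lics2019-main} to an arbitrary $p : \iota(a) = \iota(b)$ then yields the desired inhabitant of $P(p)$. I do not expect any serious obstacle: the only substantive choice is that of the family $P$, after which every step reduces to a routine computation with path composition and the equations \eqref{eq:star-inverting}--\eqref{eq:star-functoriality}; in particular there is no coherence issue to manage because $P(p)$ is a proposition.
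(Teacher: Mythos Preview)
Your proposal is correct and follows essentially the same approach as the paper: fix one endpoint, apply \cref{thm:lics2019-main} to the propositionally truncated fibre of $\glue^{s*}$, use the empty chain for the base case, and extend by $s$ or $s^{-1}$ to get the logical equivalence needed for $e$. Your write-up is in fact slightly more explicit than the paper's, spelling out the $\inl/\inr$ choices and the cancellation step.
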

\begin{proof}
 Fixing one endpoint $a_0 : A$ and setting
 \begin{align}
  & P : \Pi\{b:A\}.(\iota(a_0) = \iota(b)) \to \UU \\
  & P(q) \defeq \trunc{-1}{\Sigma(p : a_0 \sim^{s*} b).\glue^{s*}(p) = q}
 \end{align}
we need to show that, for all $q$, we have $P(q)$.
We use \cref{thm:lics2019-main}, where $r$ is given by the empty chain.
To construct $e$, we need to prove $P(q) \simeq P(q \ct \glue(s))$ for any $s : b \sim c$.
This amounts to constructing functions in both directions between the types ${\Sigma(p : a_0 \sim^{s*} b).\glue^{s*}(c) = q}$ and ${\Sigma(p : a_0 \sim^{s*} b).\glue^{s*}(c) = q \ct \glue(s)}$, where extending a chain with $s$ or with $s^{-1}$ is sufficient.
\end{proof}
The following is a ``derived induction principle'' for equalities in coequalisers:
\begin{lemma} \label{lem:simple_ind_paths}
 For a family ${P : \Pi\{x : \quot\}. x=x \to \UU}$ such that each $P(q)$ is a proposition,
 the two types
 \begin{equation}
  \Pi(\gamma : \cycle \sim).P(\glue^{s*}(\gamma)).
 \end{equation}
 and
 \begin{equation}
  \Pi(x:\quot),(q: x=x).P(q)
 \end{equation}
 are equivalent.
\end{lemma}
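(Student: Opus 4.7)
My plan is to reduce the claimed equivalence to a logical equivalence, since both $\Pi$-types in the statement are propositions. Indeed, each $P(q)$ is a proposition by assumption, and function extensionality then makes any $\Pi$-type of a propositional family a proposition itself; this applies on both sides of the equivalence.

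The easy direction, from $\Pi(x:\quot),(q: x=x).P(q)$ to $\Pi(\gamma : \cycle \sim).P(\glue^{s*}(\gamma))$, I would handle by restriction: given $f$ and a cycle $\gamma = (a, p)$, the value $f(\iota(a), \glue^{s*}(p))$ already lies in $P(\glue^{s*}(\gamma))$.

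For the converse, assume $g : \Pi(\gamma : \cycle \sim).P(\glue^{s*}(\gamma))$. The goal has as its $x$-indexed target the propositional family $x \mapsto \Pi(q : x=x).P(q)$. Hence, by the standard fact that a propositional family over the coequaliser $\quot$ is determined by its values on $\iota$ (the $\glue$ case is automatic for propositions), it suffices to prove $\Pi(q : \iota(a) = \iota(a)).P(q)$ for arbitrary $a : A$. Fixing such a $q$, I invoke the preceding surjectivity lemma: $\trunc{-1}{\Sigma(p : a \sim^{s*} a). \glue^{s*}(p) = q}$ is inhabited. Because $P(q)$ is a proposition, propositional truncation elimination yields an actual witness $(p, e)$. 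Then $(a, p) : \cycle \sim$, and $g(a, p) : P(\glue^{s*}(p))$ transports along $e : \glue^{s*}(p) = q$ to the required $P(q)$.

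I do not expect any genuine obstacle; the lemma essentially packages the previous surjectivity result with the elimination principle of $\quot$ into propositions. The only delicate point is verifying that the target family really is propositional, which is where function extensionality combines with the assumption on $P$.
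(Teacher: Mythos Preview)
Your proposal is correct and follows essentially the same approach as the paper: both sides are propositions, the forward direction is a restriction, and the converse proceeds by coequaliser induction (with the $\glue$ case automatic for propositions) followed by surjectivity of $\glue^{s*}$. You have simply spelled out in more detail what the paper compresses into ``the statement then follows from the surjectivity of $\glue^{s*}$.''
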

\begin{proof}
 Both types are propositions, and the second clearly implies the first.
 For the other direction, induction on $x$ lets us assume that $x$ is of the form $\iota(a)$ for some $a:A$; the case for the constructor $\glue$ is automatic.
 The statement then follows from the surjectivity of $\glue^{s*}$.
\end{proof}

\begin{theorem} \label{thm:gensetquotnew}
 Let $A : \UU$ be a type, $(\sim) : A \to A \to \UU$ be a relation, and $X : \UU$ be a 1-type.
 Then, the type
 of functions $({\squot} \to X)$ is equivalent to the type of triples $(f,h,c)$ (a nested $\Sigma$-type), where
 \begin{align}
  & f : A \to X \\
  & h : \Pi\{a, b : A\}. (a \sim b) \to f(a) = f(b) \\
  & c : \Pi(\gamma : \cycle \sim). h^{s*}(\gamma) = \refl. \label{eq:genset-last-comp}
 \end{align}
\end{theorem}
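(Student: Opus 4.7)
The plan is to assemble the equivalence from three ingredients already established in the excerpt: the decomposition $(\squot) \simeq \trunc 0 {\quot}$ from \cref{lem:simu-is-consec}, the universal property of the coequaliser giving $(\quot \to X) \simeq \Sigma(f : A \to X).\, \Pi\{a,b\}.(a \sim b) \to f(a) = f(b)$ as stated after \cref{lem:simu-is-consec}, and the elimination principle for set-truncations into $1$-types given by \cref{lem:CSL}.

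First I would rewrite $({\squot} \to X)$ as $(\trunc 0 {\quot} \to X)$ via \cref{lem:simu-is-consec}. Since $X$ is a $1$-type, \cref{lem:CSL} turns this into the type of pairs $(g, c')$ with $g : \quot \to X$ and
\begin{equation}
 c' : \Pi(x : \quot),(p : x = x).\, \ap_g(p) = \refl.
\end{equation}
Next, the universal property of the coequaliser identifies $g : \quot \to X$ with a pair $(f, h)$ satisfying \eqref{eq:fAX} and \eqref{eq:hAX}, where under this correspondence $f = g \circ \iota$ and $h = \ap_g \circ \glue$. So the task reduces to showing that, having fixed such $(f, h)$ and thus $g$, the condition $c'$ is equivalent to $c$.

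Observe that $c'$ is a family of propositions: for each $x$ and $p$, the type $\ap_g(p) = \refl$ is a proposition because $g(x) = g(x)$ is a set (as $X$ is $1$-truncated). Hence we may apply \cref{lem:simple_ind_paths} to $P(q) \defeq (\ap_g(q) = \refl)$, giving
\begin{equation}
 c' \;\simeq\; \Pi(\gamma : \cycle \sim).\, \ap_g(\glue^{s*}(\gamma)) = \refl.
\end{equation}
Finally, using \eqref{eq:star-shifting} with $k = g$ applied to the map $\glue$, we get $\ap_g \circ \glue^{s*} = (\ap_g \circ \glue)^{s*} = h^{s*}$, so the right-hand side above becomes exactly $c$. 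Composing all the equivalences yields the desired characterisation.

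The whole argument is essentially a bookkeeping exercise that glues together existing results; the only step where one has to be a bit careful is verifying that the composite equivalence acts as the expected map, i.e.\ that under the back-and-forth an element $g : {\squot} \to X$ is sent to the triple $(g \circ \iota \circ \tproj 0 -,\, \ap_g \circ \ap_{\tproj 0 -} \circ \glue,\, \ldots)$ in a way that is coherent with the identification $(\squot) \simeq \trunc 0 {\quot}$. This is routine but is the main place where a mistake could occur; everything else follows directly from the cited results.
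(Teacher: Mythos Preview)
Your proof is correct and follows essentially the same route as the paper: decompose via \cref{lem:simu-is-consec}, apply \cref{lem:CSL}, use \cref{lem:simple_ind_paths} together with \eqref{eq:star-shifting}, and then the coequaliser universal property \eqref{eq:canonicalmap}. The only cosmetic difference is that you swap the order of the last two steps (identifying $g$ with $(f,h)$ before invoking \cref{lem:simple_ind_paths} rather than after), and you make explicit the reason $P$ is propositional---both harmless.
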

\begin{proof}
 We have the following chain of equivalences:
\iftoggle{arxiv}{
\begin{alignat*}{5}
  &&&& \quad & \phantom{\Sigma} \squot \to X \\[.3cm]
  \textit{by } \cref{lem:simu-is-consec} &\quad &&\simeq & \quad & \phantom{\Sigma} \trunc{0}{\quot} \to X \\[.3cm]
  \textit{by } \cref{lem:CSL} &&&\simeq  &       &\Sigma g : (\quot) \to X. \\
  &&& & \quad & \phantom{\Sigma} c : \Pi\{x:\quot\},(q : x=x). \ap_g(q) = \refl \\[.3cm]
  \textit{by }\cref{lem:simple_ind_paths}&&&\simeq  &  & \Sigma g : (\quot) \to X. \\
  &&& & \quad & \phantom{\Sigma} c : \Pi(\gamma \cycle \sim). \ap_g(\glue^{s*}(\gamma)) = \refl \\[.3cm]
  \textit{by }\eqref{eq:star-shifting}&&&\simeq  &  & \Sigma g : (\quot) \to X. \\
  &&& & \quad & \phantom{\Sigma} c : \Pi(\gamma : \cycle \sim). (\ap_g \circ \glue)^{s*}(\gamma) = \refl \\[.3cm]
  \textit{by }\eqref{eq:canonicalmap}&&&\simeq   &  & \Sigma f : A \to X. \\
  &&& & \quad & \Sigma h : \Pi\{a, b : A\}. (a \sim b) \to f(a) = f(b). \\
  &&&&& \phantom{\Sigma} c : \Pi(\gamma : \cycle \sim). h^{s*}(\gamma) = \refl
 \end{alignat*}
}{
\begin{alignat*}{5}
  &&&\quad & \quad & \phantom{\Sigma} \squot \to X \\[.3cm]
  &&& \simeq & \qquad & \textit{by } \cref{lem:simu-is-consec} \\[.3cm]
  &&&\quad & \quad & \phantom{\Sigma} \trunc{0}{\quot} \to X \\[.3cm]
  &&& \simeq & \qquad & \textit{by } \cref{lem:CSL} \\[.3cm]
  &&& &       &\Sigma g : (\quot) \to X. \\
  &&& & \quad & \phantom{\Sigma} c : \Pi\{x:\quot\},(q : x=x). \ap_g(q) = \refl \\[.3cm]
  &&& \simeq & \qquad & \textit{by }\cref{lem:simple_ind_paths} \\[.3cm]
  &&& & \quad & \Sigma g : (\quot) \to X. \\
  &&& & \quad & \phantom{\Sigma} c : \Pi(\gamma \cycle \sim). \ap_g(\glue^{s*}(\gamma)) = \refl \\[.3cm]
  &&& \simeq & \qquad & \textit{by }\eqref{eq:star-shifting} \\[.3cm]
  &&& & \quad & \Sigma g : (\quot) \to X. \\
  &&& & \quad & \phantom{\Sigma} c : \Pi(\gamma : \cycle \sim). (\ap_g \circ \glue)^{s*}(\gamma) = \refl \\[.3cm]
  &&& \simeq & \qquad & \textit{by }\eqref{eq:canonicalmap} \\[.3cm]
  &&&  & \quad & \Sigma f : A \to X. \\
  &&& & \quad & \Sigma h : \Pi\{a, b : A\}. (a \sim b) \to f(a) = f(b). \\
  &&&&& \phantom{\Sigma} c : \Pi(\gamma : \cycle \sim). h^{s*}(\gamma) = \refl
 \end{alignat*}
}
\end{proof}

\section{On Confluence and Well-Foundedness} \label{sec:red-schemes}

\newcommand{\acc}[1]{\mathsf{acc}^{\mathsmaller{#1}}}

In the theory of \emph{rewriting systems}, a usually desirable property of a rewriting relation is \emph{strong normalisation},
meaning that any term can be rewritten to exactly one irreducible term.
The relations that we are interested in are much weaker.
For a rewriting system, it is usually decidable whether and to what a term can be rewritten. 
In contrast, it is in the examples discussed in the introduction generally undecidable whether, for a given $a : A$, there is a $b$ such that $a \sim b$.
Even if we already have both $a$ and $b$, it is undecidable whether $a \sim b$.
Nevertheless, the concepts of \emph{confluence} and \emph{well-foundedness} make sense and are highly useful in our setting to solve cases such as the ones from the introduction.

\begin{convention}
 To emphasise the ``directedness'' of a relation, we name relations $\leadsto$ instead of $\sim$ when talking about confluence.
 This is only to be understood as supporting the intuition, it does not come with any implicit assumptions.
 Similarly, we use the relation name $<$ in the context of well-foundedness.
 We also use the name \emph{order} synonymously with \emph{relation}; note that an order is still simply a type family $A \to A \to \UU$, again without any implicit assumptions.
\end{convention}

\begin{definition}[span]
 Given a relation $\leadsto$, a \emph{span} is a 5-tuple $(a,b,c,s,t)$ of $a,b,c : A$ together with $s : a \leadsto b$ and $t : a \leadsto c$.
 We write $(\cdot \toleads \cdot \leadsto \cdot)$ for the type of spans.
 If $b$ and $c$ are fixed, we write $(b \toleads \cdot \leadsto c)$ for the type of triples $(a,s,t)$.
 An \emph{extended span} is the same 5-tuple as s span, but with $s : a \leadsto^* b$ and $t : a \leadsto^* c$.
 The notion of a \emph{cospan} and an \emph{extended cospan} are defined analogously, with the directions of $s$ and $t$ reversed.
 We use the notations $(\cdot \toleadsstar \cdot \leadsto^* \cdot)$, and $(b \leadsto^* \cdot \toleadsstar c)$ and so on in the obvious way.
\end{definition}

Spans and cospans are also known as \emph{local peaks} and \emph{local valleys}, respectively \cite{CR1936}. Extended spans and cospans are, in their terminology, \emph{(global) peaks} and \emph{valleys}. 

\begin{definition}[confluence] \label{def:confluence}
 We say that a relation $\leadsto$ on $A$ is \emph{locally confluent} if, for any span, there is a matching extended cospan:
 \begin{equation}
    \mathsf{locConf}(\leadsto) \; \defeq \; \Pi \{b,c : A\}.(b \toleads \cdot \leadsto c) \to (b \leadsto^* \cdot \toleadsstar c).
 \end{equation}
 We say that $\leadsto$ is \emph{confluent} if we can replace the assumption $(b \toleads \cdot \leadsto c)$ by the weaker 
 assumption $(b \toleadsstar \cdot \leadsto^* c)$.
\end{definition}

\begin{remark}
 Note that \emph{being [locally] confluent} is not a proposition; it should be understood as \emph{carrying a [local] confluence structure}.
\end{remark}

The definition of \emph{well-foundedness} is standard as well.
If $x < y$, we say that $x$ is smaller than $y$.
Recall that a point $a : A$ is \emph{$<$-accessible} if every $x < a$ is $<$-accessible, and $<$ is \emph{well-founded} if all points are $<$-accessible.
Type-theoretically, this can be expressed as follows:
\begin{definition}[$\Phi_<$ in \cite{aczelinductive}]
 The family $\acc < : A \to \UU$ is generated inductively by a single constructor,
 \begin{equation}
     \mathsf{step}: \Pi(a:A). (\Pi(x:A). (x < a) \to \acc < (x)) \to \acc < (a)
 \end{equation}
 Further, we define
 \begin{equation}
  \mathsf{isWellFounded}(<) \defeq \Pi(a:A). \acc < (a).
 \end{equation}
\end{definition}

While the definition in \cite[Chp.~10.3]{hott-book} is only given for the special case that $A$ is a set and $<$ is valued in propositions, the more general case that we consider works in exactly the same way (cf.\ our formalisation).
In particular, we have the following two results:

\begin{lemma}
 For any $x$, the type $\acc < (x)$ is a proposition.
 Further, the statement that $<$ is well-founded is a proposition. \qed
\end{lemma}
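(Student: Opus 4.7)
The plan is to prove the two claims in sequence, the second following almost immediately from the first. For the first claim, the idea is the standard one: I will do well-founded induction on the first of two given accessibility proofs to show that it equals the second. Concretely, fix $x:A$ and $p, q : \acc{<}(x)$; the goal is $p = q$. Rather than doing induction on $x$ (which does not yet carry a well-founded structure), I perform induction on $p$, using the statement
\[
 P(a, p) \;\defeq\; \Pi(q : \acc{<}(a)).\, p = q.
\]
The elimination principle for $\acc{<}$ reduces this to the following: for every $a:A$ and every $\phi : \Pi(y:A).(y<a) \to \acc{<}(y)$ such that $\Pi(y:A),(r:y<a),(q':\acc{<}(y)).\,\phi(y,r)=q'$, and for every $q : \acc{<}(a)$, we must show $\mathsf{step}(a,\phi) = q$.

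To do so, observe that $q$ must be of the form $\mathsf{step}(a,\psi)$ for some $\psi$, since $\mathsf{step}$ is the only constructor of $\acc{<}$ (formally, one peels this off by the dependent eliminator applied to $q$ with the family $Q(q) \defeq \mathsf{step}(a,\phi) = q$). It therefore suffices to show $\mathsf{ap}_{\mathsf{step}(a)}$ of an equality $\phi = \psi$. By function extensionality, this reduces to proving $\phi(y,r) = \psi(y,r)$ for every $y$ and every $r : y<a$, which is exactly an instance of the induction hypothesis applied to $\psi(y,r)$. This closes the first part.

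For the second part, $\mathsf{isWellFounded}(<)$ is by definition the $\Pi$-type $\Pi(a:A).\,\acc{<}(a)$. Since each fibre $\acc{<}(a)$ is a proposition by the first part, and being a proposition is preserved under dependent products (an immediate consequence of function extensionality), $\mathsf{isWellFounded}(<)$ is a proposition.

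The only mildly subtle point is the first step, where one has to resist the temptation to do induction on $x$ directly (we have no well-founded structure on $<$ yet, so this would be circular). Doing induction on the accessibility proof $p$ itself sidesteps the issue cleanly, and everything else is a routine combination of the eliminator for $\acc{<}$ and function extensionality.
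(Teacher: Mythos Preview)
The paper does not actually give a proof of this lemma: note the \qed\ placed directly after the statement, together with the preceding remark that ``the more general case that we consider works in exactly the same way'' as \cite[Chp~10.3]{hott-book} and the reference to the Lean formalisation. Your sketch is precisely that standard argument (accessibility induction on the first witness, case on the second, then function extensionality), and it is correct; the one point to be a little careful with when formalising is that the family $Q$ you use to peel off $\mathsf{step}$ from $q$ must be generalised over the base point as well, since the eliminator for $\acc<$ quantifies over both $a$ and the accessibility proof.
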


\begin{lemma}[{accessibility induction \cite[Chp.~10.3]{hott-book}}]
 Assume we are given a family $P : A \to \UU$ such that we have
 \begin{equation}
  \Pi(a_0 : A). \acc < (a_0) \to \left(\Pi(a<a_0). P(a)\right) \to P(a_0).
 \end{equation}
 In this case, we get:
 \begin{equation}
  \Pi(a_0 : A). \acc < (a_0) \to P(a_0).
 \end{equation}
 If $<$ is well-founded, the argument $\acc < (a_0)$ can be omitted and the principle is known as \emph{well-founded induction}.
\end{lemma}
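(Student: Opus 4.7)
The plan is to carry out induction on the accessibility predicate $\acc<$, which was introduced above as the inductive family generated by the single constructor $\mathsf{step}$. Since $\acc<$ is defined inductively, it comes with an eliminator: to construct a term of type $\Pi(a:A),(\alpha : \acc<(a)). Q(a,\alpha)$ for some motive $Q$, it suffices to handle the $\mathsf{step}$ case. I will instantiate this eliminator with the constant motive $Q(a,\alpha) \defeq P(a)$.

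Concretely, I will define the desired function
\begin{equation*}
  \mathsf{wfRec} : \Pi(a_0 : A). \acc<(a_0) \to P(a_0)
\end{equation*}
by pattern matching on the accessibility proof. If $\alpha : \acc<(a_0)$ is $\mathsf{step}(a_0,f)$ for some $f : \Pi(x:A).(x<a_0) \to \acc<(x)$, then the recursive call gives $\lambda x. \lambda p. \mathsf{wfRec}(x, f(x,p))$ of type $\Pi(x:A),(p:x<a_0). P(x)$, i.e.\ a term inhabiting $\Pi(a<a_0). P(a)$. Feeding this together with $\alpha$ and $a_0$ into the hypothesis of the lemma yields the required element of $P(a_0)$. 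Note that the inductive call is on the \emph{structurally smaller} accessibility witness $f(x,p)$, so termination is automatic from the fact that $\acc<$ is an ordinary inductive family, in the sense of \cite{dybjer1994inductive}. The well-founded case is then an immediate corollary: if $w : \mathsf{isWellFounded}(<)$, then $\lambda a_0. \mathsf{wfRec}(a_0, w(a_0))$ has type $\Pi(a_0 : A). P(a_0)$.

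I do not expect any real obstacle here; the argument is the type-theoretic content of the familiar classical proof of well-founded induction, lifted to the more permissive setting where $P$ is valued in $\UU$ rather than in $\Prop$ and where $<$ is not required to be propositional. The only point worth checking is that the constant motive $Q(a,\alpha) \defeq P(a)$ is legitimate, but this is obvious since the eliminator for $\acc<$ makes no size restriction on the motive (cf.\ our Lean formalisation, where this is the standard eliminator generated for the inductive family). Univalence and function extensionality play no role in this argument.
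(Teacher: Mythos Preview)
Your argument is correct and is the standard one: the paper does not actually supply a proof for this lemma but merely cites \cite[Chp.~10.3]{hott-book}, and your proposal is precisely the construction given there (eliminate $\acc<$ with the non-dependent motive $Q(a,\alpha)\defeq P(a)$, feeding the recursive call into the hypothesis). Nothing to add.
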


An immediate application is the following:
\begin{lemma} \label{lem:a<a-no-acc}
 If $a < a$, then $a$ is not $<$-accessible.
\end{lemma}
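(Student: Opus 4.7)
The plan is to prove the contrapositive: for every $a$ with $\acc < (a)$, we have $\neg (a < a)$. Once this auxiliary statement is established, the lemma follows immediately: given a hypothesis $h : a < a$ and an assumed accessibility proof $\alpha : \acc < (a)$, we apply the contrapositive to $\alpha$ and $h$ to derive the empty type.

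To prove the contrapositive, I would invoke accessibility induction with the family
\begin{equation*}
 P(a) \defeq \neg(a < a).
\end{equation*}
The inductive step requires, for a fixed $a_0$, a term of type $P(a_0)$ under the assumption that $\Pi(a < a_0). P(a)$, i.e.\ the induction hypothesis says that every $<$-predecessor of $a_0$ is not related to itself. So suppose $h : a_0 < a_0$; the key trick is to apply the induction hypothesis to $a_0$ itself, which is legal precisely because $h$ witnesses $a_0 < a_0$. This yields a term of type $\neg(a_0 < a_0)$, which applied to $h$ produces the desired absurdity.

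No step here seems like a real obstacle; the argument is the standard type-theoretic formulation of the classical observation that a well-founded relation is irreflexive, lifted to the local form about accessible elements. The only mildly subtle point is the self-application of the inductive hypothesis at the very point one is trying to rule out, but this is forced by the shape of the accessibility predicate and fits directly into the accessibility induction principle recalled just before the lemma. Finally, I would note that the lemma is equivalently phrased as: if $<$ is well-founded, then it is irreflexive, since in that case every $a$ is $<$-accessible.
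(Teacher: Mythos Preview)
Your proof is correct and essentially identical to the paper's: the paper also applies accessibility induction with the family $P(a) \defeq (a < a) \to \emptyset$, which is precisely your $P$. Your write-up just spells out the inductive step (the self-application of the hypothesis via $h : a_0 < a_0$) that the paper leaves implicit.
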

\begin{proof}
 $<$-accessibility induction with the family $P(a) \defeq (a < a) \to \emptyset$.
\end{proof}

\begin{lemma} \label{lem:acc-trans-acc}
 If $a:A$ is $<$-accessible, then it is $<^+$-accessible.
\end{lemma}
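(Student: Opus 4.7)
The plan is to use $<$-accessibility induction to prove the stronger statement $\Pi(a:A).\acc{<}(a) \to \acc{<^+}(a)$, from which the lemma is immediate. So I assume a point $a_0 : A$ with $\acc{<}(a_0)$ and the inductive hypothesis that every $a < a_0$ is already $<^+$-accessible, and I need to produce an element of $\acc{<^+}(a_0)$.

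By the constructor $\mathsf{step}$ for $\acc{<^+}$, this reduces to showing that for every $x : A$ with $x <^+ a_0$ we have $\acc{<^+}(x)$. I would prove this by a secondary induction on the derivation of $x <^+ a_0$, using the constructors of $<^+$ described after \eqref{eq:refl-trans-rel-def} (a base constructor $(a < b) \to (a <^+ b)$ and a $\mathsf{snoc}$ step). In the base case the hypothesis is simply $s : x < a_0$, so the outer inductive hypothesis applied to $s$ gives $\acc{<^+}(x)$ directly. In the $\mathsf{snoc}$ case we have $\alpha : x <^+ y$ and $t : y < a_0$; the outer inductive hypothesis applied to $t$ yields $\acc{<^+}(y)$, and unfolding this (i.e.\ destructing the $\mathsf{step}$ constructor) gives $\Pi(z : A).(z <^+ y) \to \acc{<^+}(z)$, which applied to $\alpha$ produces the required $\acc{<^+}(x)$.

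There is no real obstacle here; the only subtle point is choosing the right induction target. One might be tempted to try induction directly on $\acc{<}(a)$ with the naive goal $\acc{<^+}(a)$, which works, but it is important to realise that the induction hypothesis only gives $<^+$-accessibility of \emph{immediate} $<$-predecessors of $a_0$, so to propagate it to arbitrary $<^+$-predecessors one must open the $\acc{<^+}$ constructor again after appealing to the outer IH at an intermediate vertex $y$. Once that pattern is spotted, the proof is purely structural and transfers verbatim into Lean.
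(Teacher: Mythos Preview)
Your proof is correct and is essentially the same as the paper's, which is the one-liner ``By $<$-accessibility induction on $P(a) \defeq (\acc < a) \to (\acc {<^+} a)$''; you have simply spelled out the details of the inner induction on the derivation of $x <^+ a_0$ that the paper leaves implicit. (Minor wording point: the statement $\Pi(a:A).\,\acc{<}(a) \to \acc{<^+}(a)$ is not a strengthening of the lemma but the lemma itself.)
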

\begin{proof}
 By $<$-accessibility induction on $P(a) \defeq (\acc < a) \to (\acc {<^+} a)$.
\end{proof}

The notion relevant to our examples and application is not well-foundedness, but co-well-foundedness:
instead of ``no infinite sequences to the left'' the property we want to use is ``no infinite sequences to the right''.
Of course, this amounts to a simple swap of arguments.
Such a relation is sometimes referred to as \emph{Noetherian} in the rewriting literature, or simply as \emph{terminating}.
We avoid the latter terminology since the non-decidable character of the relation is important in our setting.

\begin{definition}[Noetherian]
 A relation $>$ is called \emph{Noetherian} (or \emph{co-well-founded}) if the opposite relation $>^\mathsf{op}$, defined by $(b >^\mathsf{op} a) \defeq (a > b)$, is well-founded.
 The corresponding induction principle is known as \emph{Noetherian induction}.
\end{definition}

We have already seen an example in the introduction of this paper.
Recall the relation \eqref{eq:fg-relation} that is used in the construction of the free group in \cref{ex:fg}:
Given two lists $\ell_1,\ell_2 : \List(A+A)$, we have $\ell_1 \leadsto \ell_2$ if the first list can be transformed into the second list by removing exactly two elements.
The two removed list elements have to be consecutive and ``inverse'' to each other, i.e.\ one is of the form $\inl(a)$, the other $\inr(a)$.

\begin{lemma}[{free groups, continuing \cref{ex:fg}}] \label{lem:fg-Noether-confl}
 The relation $\leadsto$ on lists, in \eqref{eq:fg-relation} named $\sim$,
 is Noetherian and locally confluent.
%
\end{lemma}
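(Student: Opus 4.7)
The two assertions are independent, and I would handle them by elementary arguments.

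For \emph{Noetherian-ness}, the key observation is that every reduction $\ell \leadsto \ell'$ strictly decreases the list length by exactly two, so $|\ell'| = |\ell| - 2$. Since $<$ on $\N$ is well-founded, a routine accessibility induction shows that the pullback order $\ell_1 <' \ell_2 \defeq |\ell_1| < |\ell_2|$ on $\List(A+A)$ is well-founded: assuming $|\ell|$ is $<$-accessible in $\N$, one concludes $\ell$ is $<'$-accessible. Since $\leadsto^{\mathsf{op}}$ is a subrelation of $<'$, and well-foundedness is inherited by subrelations (by accessibility induction applied to the larger relation), $\leadsto$ is Noetherian.

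For \emph{local confluence}, suppose we are given a span $\ell \leadsto \ell_1$ and $\ell \leadsto \ell_2$. Each reduction is witnessed by a decomposition of $\ell$ into a prefix, a cancelled pair $[x, x^{-1}]$, and a suffix. Let $i$ and $j$ be the two prefix lengths, and assume $i \leq j$ (the opposite case is symmetric). I would proceed by case analysis on $j - i$. When $i = j$, the cancelled pair is determined by its position in $\ell$, so $\ell_1 = \ell_2$ and the empty cospan suffices. When $j \geq i + 2$, the two cancellations are disjoint, and applying each reduction to the other's output yields a common reduct $\ell'$, giving $\ell_1 \leadsto \ell' \toleads \ell_2$. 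When $j = i + 1$, the pairs overlap at position $i+1$: the constraints $\ell[i+1] = \ell[i]^{-1}$ and $\ell[i+2] = \ell[i+1]^{-1}$ force the pattern $x, x^{-1}, x$ at positions $i, i+1, i+2$, and a direct computation shows $\ell_1 = \ell_2$, so again the empty cospan works.

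The main obstacle I foresee is combinatorial rather than conceptual: in each case one must carefully manipulate equalities of concatenated lists to extract the prefixes, suffixes, and cancelled letters, and in particular derive the identification $x_2 = x_1^{-1}$ in the overlap case. This is standard on paper but fiddly in a proof assistant, and is typically handled by induction on the relative prefix lengths $i$ and $j$. The Noetherian part, by comparison, is entirely routine once the length map is in place.
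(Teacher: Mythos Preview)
Your proposal is correct and follows essentially the same approach as the paper: the Noetherian part is handled via the length-decreasing observation, and local confluence is proved by the standard three-case critical pair analysis (full overlap, one-position overlap yielding the pattern $x,x^{-1},x$, and disjoint redexes). Your use of position indices $i,j$ to organise the case split is merely a presentational variant of the paper's description in terms of redex overlap.
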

\begin{proof}
 Noetherian is trivial since each step decreases the length of the list.
 Local confluence is shown with a standard critical pair analysis.
 Assume that we have a list $\ell$ which contains two redexes $(x,x^{-1})$ and $(y,y^{-1})$.
 We write $\ell_x$ for the list with the first redex removed and $\ell_y$ for the list with the second redex removed.
 There are only three cases:
 \begin{enumerate}
  \item The two redexes are the same (they ``fully overlap''). In this case, there is nothing to do: The extended cospan is empty.
  \item The two redexes partially overlap, in the sense that $x^{-1} = y$ (or $y^{-1} = x$, which is equivalent).
  In this case, we again have $\ell_x = \ell_y$.
  \item There is no overlap between the two redexes. We can then remove the redex $(y,y^{-1})$ from $\ell_x$ and have constructed a list equal to the one we get if we remove $(x,x^{-1})$ from $\ell_y$.
 \end{enumerate}
\end{proof}

\begin{corollary}[of \cref{lem:acc-trans-acc}] \label{cor:+-Noetherian}
 If $<$ is well-founded [Noetherian], then so is $<^+$.
\end{corollary}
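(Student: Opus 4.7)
The plan is to derive both halves of the corollary directly from \cref{lem:acc-trans-acc}, handling the well-founded case first and the Noetherian case by a short duality argument.

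First I would treat the well-founded case. By definition, $<$ well-founded means $\Pi(a:A).\acc{<}(a)$. Given any $a:A$, \cref{lem:acc-trans-acc} turns the witness $\acc{<}(a)$ into a witness of $\acc{<^+}(a)$. Taking the $\Pi$ of this pointwise transformation yields $\Pi(a:A).\acc{<^+}(a)$, which is exactly $\mathsf{isWellFounded}(<^+)$. So the well-founded half is literally one application of \cref{lem:acc-trans-acc} under a $\Pi$.

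For the Noetherian case, I would use the fact that Noetherianness of $>$ is by definition well-foundedness of $>^{\mathsf{op}}$. The key observation is that the transitive closure commutes with taking opposites in the sense that
\begin{equation}
 (>^+)^{\mathsf{op}} \; \simeq \; (>^{\mathsf{op}})^+,
\end{equation}
which is a straightforward induction on the inductive family defining the transitive closure (reversing the chain of generators in either direction). Given this, if $>$ is Noetherian then $>^{\mathsf{op}}$ is well-founded, hence by the first half $(>^{\mathsf{op}})^+$ is well-founded, and transporting along the equivalence above tells us $(>^+)^{\mathsf{op}}$ is well-founded, i.e.\ $>^+$ is Noetherian.

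The only mildly delicate point is the equivalence $(>^+)^{\mathsf{op}} \simeq (>^{\mathsf{op}})^+$, since the paper's transitive closure is defined via $\consLong$ on the right, so reversing chains requires a small induction with an accumulator. I do not expect a genuine obstacle here; the argument is purely combinatorial and short, and the main work of the corollary is already packaged in \cref{lem:acc-trans-acc}.
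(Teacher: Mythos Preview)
Your proposal is correct and matches the paper's intent: the corollary is stated without proof, as an immediate consequence of \cref{lem:acc-trans-acc}, and your unfolding (pointwise application under $\Pi$ for the well-founded case, plus the observation $(>^+)^{\mathsf{op}} \simeq (>^{\mathsf{op}})^+$ for the Noetherian case) is exactly what that ``immediate'' amounts to.
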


It is a classic result by by Newman \cite{newman1942theories} that local confluence is as strong as general confluence in the context of Noetherian relations.
The proof by Huet  \cite{Huet1980ConfluentRA} can directly be expressed in our setting.
Strictly speaking, our formulation could be described as stronger since we do not assume that the relation is decidable or even given as a function, but Huet's argument stays exactly the same:
\begin{theorem}[Newman \cite{newman1942theories}, Huet \cite{Huet1980ConfluentRA}] \label{thm:NewmanHuet}
 A locally confluent and Noetherian relation $\leadsto$ is confluent.
\end{theorem}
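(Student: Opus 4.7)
The plan is to follow Huet's classical proof of Newman's Lemma, adapted to our setting where the relation $\leadsto$ is not assumed decidable. The key observation is that in our framework, confluence means: for every extended span $b \toleadsstar a \leadsto^* c$, there exists an extended cospan $b \leadsto^* d \toleadsstar c$. So the statement to prove is
\[
 \Pi \{a,b,c : A\}. (b \toleadsstar a \leadsto^* c) \to (b \leadsto^* \cdot \toleadsstar c).
\]
I would prove this by Noetherian induction on $a$, i.e.\ well-founded induction on $\leadsto^{\mathsf{op}}$. This is legitimate because $\leadsto$ is assumed Noetherian; by \cref{cor:+-Noetherian} one could equivalently induct on $(\leadsto^+)^{\mathsf{op}}$, which provides slightly more slack when invoking the IH at points several steps below $a$.

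The induction splits into cases according to whether the two chains $s : a \leadsto^* b$ and $t : a \leadsto^* c$ are empty. If $s$ is empty, then $a = b$ and the cospan is given by $t$ together with the empty chain at $c$; the symmetric situation is analogous. If both chains are non-empty, we can decompose them as
\[
 a \leadsto b_1 \leadsto^* b \qquad \text{and} \qquad a \leadsto c_1 \leadsto^* c.
\]
Local confluence applied to the span $(b_1 \toleads a \leadsto c_1)$ yields a point $e$ together with $b_1 \leadsto^* e$ and $c_1 \leadsto^* e$. Now both $b_1$ and $c_1$ are reachable from $a$ via a single $\leadsto$-step, so they lie strictly below $a$ in the well-founded order, and the IH is available at each of them.

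Applying the IH at $b_1$ to the extended span $(b \toleadsstar b_1 \leadsto^* e)$ gives a point $f$ with $b \leadsto^* f$ and $e \leadsto^* f$. Concatenating yields $c_1 \leadsto^* e \leadsto^* f$, so the IH at $c_1$ applied to $(c \toleadsstar c_1 \leadsto^* f)$ yields a point $g$ with $c \leadsto^* g$ and $f \leadsto^* g$. Composing, we obtain $b \leadsto^* f \leadsto^* g \toleadsstar c$, which is the required extended cospan with apex $g$.

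The main obstacle is not conceptual but careful bookkeeping: ensuring that the cases ``empty chain'' and ``non-empty chain'' are handled cleanly (one pattern-matches on $s$ and $t$ as lists built from $\nil$ and $\consLong$), and that the two successive uses of the induction hypothesis are justified at points genuinely smaller than $a$ in the Noetherian order. A small subtlety worth flagging is that since we do not assume $A$ to be a set nor $\leadsto$ to be proposition-valued, the two witnesses $b \leadsto^* b_1 \leadsto^* b$ and $a \leadsto^* b$ are not equal but merely carry a comparison map; however, this does not affect the proof because the statement of confluence only requires the existence of \emph{some} apex with \emph{some} chains, so the structural recursion on the given chains $s,t$ is sufficient and no coherence between chain representations is needed.
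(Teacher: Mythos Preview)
Your proposal is correct and follows essentially the same approach as the paper: Huet's proof by Noetherian induction on the apex of the span, using local confluence on the first steps and then two successive applications of the induction hypothesis at the immediate successors. The paper's variables $u,v,w,x,y,z$ correspond to your $a,b_1,c_1,e,f,g$, and the structure of the argument is identical.
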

\begin{proof}
 By Noetherian induction, with $P(u)$ expressing that any extended span ${s \toleadsstar \cdot \leadsto^* t}$ can be completed to a quadrangle.
 If either leg has length $0$, the statement is trivial.
 Otherwise, we can write it as ${s {}^* \!\! \toleads v \toleads u \leadsto w \leadsto^* t}$.
 By local confluence, we get $x$ such that $v \leadsto^* x$ and $w \leadsto^* x$.
 By $P(v)$, we get $y$ with $s \leadsto^* y$ and $x \leadsto y$.
 By $P(w)$, we get $z$ with $y \leadsto^* z$ and $t \leadsto^* z$.
 This proof can be pictured as shown below.
 
 \begin{center}
 \begin{tikzpicture}[x=1.2cm,y=-.75cm] 
  \node (U) at (0,0) {$u$};
  \node (V) at (-1,1) {$v$};
  \node (W) at (1,1) {$w$};
  \node (S) at (-2,2) {$s$};
  \node (T) at (2,2) {$t$};
  \node (X) at (0,2) {$x$};
  \node (Y) at (-1,3) {$y$};
  \node (Z) at (0,4) {$z$};
  
  \draw[->] (U) to node [above,sloped] {} (V);
  \draw[->] (U) to node [above,sloped] {} (W);
  \draw[->] (V) to node [above,sloped] {} (S);
  \draw[->] (V) to node [above,sloped] {} (X);
  \draw[->] (W) to node [above,sloped] {} (X);
  \draw[->] (W) to node [above,sloped] {} (T);
  \draw[->] (S) to node [above,sloped] {} (Y);
  \draw[->] (X) to node [above,sloped] {} (Y);
  \draw[->] (Y) to node [above,sloped] {} (Z);
  \draw[->] (T) to node [above,sloped] {} (Z);
  
  \node (star1) at (-1.85,1.7) {\tiny *};
  \node (star2) at (-.15,1.7) {\tiny *};
  \node (star3) at (.15,1.7) {\tiny *};
  \node (star4) at (1.85,1.7) {\tiny *};
  \node (star5) at (-1.15,2.7) {\tiny *};
  \node (star6) at (-.85,2.7) {\tiny *};
  \node (star7) at (-.15,3.7) {\tiny *};
  \node (star6) at (.15,3.7) {\tiny *};
  
  \node (label1) at (0,1) {\small (l.\ confl.)};
  \node (label2) at (-1,2) {\small $P(v)$};
  \node (label1) at (.5,2.5) {\small $P(w)$};
 \end{tikzpicture}
 \end{center}
\end{proof}

\section{Ordering Cycles} \label{sec:order-cycles}

Let again $<$ be a relation on $A$.
The aim of this section is to construct a relation on cycles $\cycle{<}$.
We proceed in several steps.
We start by defining a relation on ordinary lists, i.e.\ on the type $\List \, A$, similar to the well-known \emph{multiset extension}.
Next, we extend the relation to \emph{rotating} lists.
Finally, we construct the relation on cycles and prove essential properties.

\subsection{An Order on Lists}

The type of \emph{lists on $A$} is, as it is standard, inductively generated by a constructor $\mathsf{nil}$ for the empty list and a constructor adding a single element, and it is written as $\List \, A$.
As for chains, we write $\consPrime$ for both adding a single element and for list concatenation.
We write $[a]$ for the list of length $1$ with the single element $a:A$.

\begin{definition} \label{def:list-order}
 Given the relation $<$ as before, we define a relation $<^L$ on $\List \, A$:
 For lists $k, l$, we define $k <^L l$ to mean that $k$ can be constructed from $l$ by replacing a single list element $x$ in $l$ by a finite number of elements of $A$, all of which are smaller than $x$.
\end{definition}
 
 A type-theoretic implementation of this relation is the following.
 For $k : \List \, A$ and $x : A$, we first define $k <^{\mathsf{all}} x$ as ``every element $y$ of $k$ satisfies $y < x$''.
 The relation $<^L$ is then generated inductively by a single constructor as in
 \begin{equation}
  \begin{aligned}
   &\mathsf{inductive } \; (<^L) : \List \, A \to \List \, A \to \UU \\
   & \qquad \mathsf{step}: \Pi\{k, l_1, l_2 : \List \, A\}.\Pi\{x : A\}. \\
   & \hspace{1.5cm} (k <^{\mathsf{all}} x) \to  (l_1 \cons k \cons l_3) <^L (l_1 \cons x \cons l_3)
  \end{aligned}
 \end{equation}


The following strengthening of the induction principle is derivable:

\begin{lemma}[nested induction] \label{lem:nested-ind}
 Assume we are given a relation $<_1$ on a type $B$, a relation $<_2$ on a type $C$, and a family $P : B \times C \to \UU$.
 Assume further that we are given
 \begin{equation}\label{eq:nested-ind-assumption}
 \begin{aligned}
  & \Pi(b : B, c:C). \acc {<_1} (b) \to \acc {<_2} (c) \to \\
  & \qquad (\Pi(b' <_1 b). P(b',c)) \to (\Pi(c'<_2 c). P(b,c')) \to \\
  & \qquad P(b,c).
 \end{aligned}
 \end{equation}
 Then, we get:
 \begin{equation}
  \Pi(b : B,c : C). \acc {<_1} (b) \to \acc {<_2} (c) \to P(b,c).
 \end{equation}
\end{lemma}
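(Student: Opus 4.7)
The plan is to carry out two nested accessibility inductions, first on $<_1$ and then on $<_2$, after reformulating the goal so that the outer induction hypothesis is naturally parametrised by $c$. Concretely, I would introduce the auxiliary family $Q : B \to \UU$ defined by
\[
  Q(b) \;\defeq\; \Pi(c:C). \acc{<_2}(c) \to P(b,c),
\]
and reduce the lemma to showing $\Pi(b:B). \acc{<_1}(b) \to Q(b)$. The reason for packaging $c$ and $\acc{<_2}(c)$ inside $Q$ is that the induction hypothesis coming out of the outer accessibility induction must hold for \emph{all} $c$, not just the particular one we will later fix.

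First I would apply the accessibility induction principle (recalled just after the definition of $\acc < $) to the relation $<_1$ and the family $Q$. Fixing $b : B$ with $\acc{<_1}(b)$, this gives the outer induction hypothesis
\[
  \mathsf{IH}_1 : \Pi(b' <_1 b). Q(b'),
\]
i.e.\ $\Pi(b' <_1 b),(c:C). \acc{<_2}(c) \to P(b',c)$, and reduces the goal to $Q(b)$. Next, I would apply accessibility induction on $<_2$ with the family $c \mapsto P(b,c)$ (with the fixed $b$ from the outer step). Fixing $c : C$ with $\acc{<_2}(c)$, this yields the inner induction hypothesis
\[
  \mathsf{IH}_2 : \Pi(c' <_2 c). P(b,c'),
\]
and leaves $P(b,c)$ to be shown. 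At this point I would invoke the assumption \eqref{eq:nested-ind-assumption} at $(b,c)$ with the two accessibility proofs already in scope. Its first premise $\Pi(b' <_1 b). P(b',c)$ is discharged by $\mathsf{IH}_1(b')(c)(\acc{<_2}(c))$, and its second premise $\Pi(c' <_2 c). P(b,c')$ is exactly $\mathsf{IH}_2$.

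I do not expect any genuine obstacle here: once the family $Q$ is chosen so that it binds the $c$-argument together with its accessibility witness, both accessibility inductions are entirely mechanical, and the lemma amounts to the ``product'' of two accessibility inductions. The only point that requires a moment's care is recognising that, in the outer step, one must induct not on ``$\Pi b. P(b,c)$'' for a fixed $c$ but on $Q(b)$, so that the inner induction can be performed with access to both $\mathsf{IH}_1$ (applied later at the relevant $c$) and $\mathsf{IH}_2$ simultaneously. By symmetry, inducting first on $<_2$ and then on $<_1$ would work equally well.
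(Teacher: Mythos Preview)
Your proposal is correct and is precisely the ``double induction on the accessibility witnesses'' that the paper gives as its one-line proof. Your explicit choice of the auxiliary family $Q(b) \defeq \Pi(c:C).\,\acc{<_2}(c) \to P(b,c)$ is exactly the right unfolding of that phrase, and nothing further is needed.
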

\begin{proof}
 By double induction on the accessibility witnesses.
\end{proof}

The next three short lemmas show that $<^L$ is well-founded assuming that $<$ is.
The argument mirrors Nipkov's proof that the multiset extension preserves well-foundedness \cite{nipkow:multiset}.

\begin{lemma}\label{lem:sum-accessible}
 If lists $l$ and $k$ are both $<^L$-accessible, then so is $l \cons k$.
\end{lemma}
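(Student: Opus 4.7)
The plan is to prove this by nested induction on the two accessibility witnesses, using \cref{lem:nested-ind} with $B = C = \List A$, both relations being $<^L$, and the family
\begin{equation*}
P(l,k) \defeq \acc{<^L}(l \cons k).
\end{equation*}
So the task reduces to establishing the hypothesis \eqref{eq:nested-ind-assumption}: given lists $l,k$ together with both $<^L$-accessibility witnesses, and assuming
\begin{equation*}
\Pi(l' <^L l). \acc{<^L}(l' \cons k) \quad\text{and}\quad \Pi(k' <^L k). \acc{<^L}(l \cons k'),
\end{equation*}
I need to conclude $\acc{<^L}(l \cons k)$.

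By the $\mathsf{step}$ constructor of accessibility, it suffices to take an arbitrary $m <^L l \cons k$ and show $\acc{<^L}(m)$. Inversion on this witness gives a decomposition $l \cons k = l_1 \cons x \cons l_3$ and a list $j$ with $j <^{\mathsf{all}} x$ such that $m = l_1 \cons j \cons l_3$. The key observation is that the designated element $x$ must lie either in the $l$-part or in the $k$-part of the concatenation, which yields a case split:
\begin{itemize}
\item If $x$ occurs inside $l$, write $l = l_1 \cons x \cons l_3'$ with $l_3 = l_3' \cons k$. Then $m = (l_1 \cons j \cons l_3') \cons k$, and $l_1 \cons j \cons l_3' <^L l$ by the $\mathsf{step}$ constructor of $<^L$. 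The first induction hypothesis applies and gives $\acc{<^L}(m)$.
\item If $x$ occurs inside $k$, symmetrically write $k = k_1 \cons x \cons l_3$ with $l_1 = l \cons k_1$, giving $m = l \cons (k_1 \cons j \cons l_3)$ with $k_1 \cons j \cons l_3 <^L k$, so the second induction hypothesis applies.
\end{itemize}

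The main obstacle, and the only thing that takes any care, is the case distinction above: formally, one must argue from the equation $l \cons k = l_1 \cons x \cons l_3$ that the marked position falls on one side of the split. On paper this is visually obvious; in a formalisation it is handled by a short induction on $l$ (or equivalently on the length of $l_1$ compared with the length of $l$), producing the two explicit decompositions used above. Once the decomposition is in hand, each case is a direct appeal to the corresponding induction hypothesis together with a single application of the $\mathsf{step}$ constructor of $<^L$, so no further combinatorial work is needed.
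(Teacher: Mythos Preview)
Your proposal is correct and follows essentially the same approach as the paper: both invoke \cref{lem:nested-ind} with $B=C=\List\,A$, $<_1=<_2=<^L$, and $P(l,k)\defeq\acc{<^L}(l\cons k)$, and both discharge the hypothesis by observing that any $m<^L l\cons k$ decomposes as $l'\cons k$ with $l'<^L l$ or $l\cons k'$ with $k'<^L k$. Your write-up simply spells out the inversion and the positional case split in more detail than the paper does.
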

\begin{proof}
 We use \cref{lem:nested-ind} with $\List \, A$ and $<^L$ as both $<_1$ and $<_2$, and with $P(l,k) \defeq \acc {<^L} (l \cons k)$.
 We need to show \eqref{eq:nested-ind-assumption}.
 Assume $l$ and $k$ are $<^L$-accessible.
 By definition, $P(l,k)$ holds if every list smaller than $l \cons k$ is accessible.
 By \cref{def:list-order}, a smaller list can either be written as $l' \cons k$ with $l' <^L l$ or as $l \cons k'$ with $k' <^L k$.
 These are accessible by induction hypothesis.
\end{proof}

\begin{lemma} \label{lem:singleton-list-accessible}
 If $a$ is $<$-accessible, then the singleton list $[a]$ is $<^L$-accessible.
\end{lemma}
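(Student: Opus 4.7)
The plan is to proceed by $<$-accessibility induction on $a$. So assume $a$ is accessible and, as induction hypothesis, assume that for every $a' < a$ the singleton list $[a']$ is $<^L$-accessible. To show that $[a]$ is $<^L$-accessible, I need to verify that every list $l$ with $l <^L [a]$ is $<^L$-accessible. By inspection of the single constructor $\mathsf{step}$ of $<^L$, any such $l$ must be obtained by replacing the sole entry $a$ of $[a]$ with a list $k$ satisfying $k <^{\mathsf{all}} a$ (the surrounding prefix and suffix $l_1, l_3$ are forced to be $\mathsf{nil}$). Hence it suffices to show that every $k : \List \, A$ with $k <^{\mathsf{all}} a$ is $<^L$-accessible.

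The natural way to prove this auxiliary claim is by structural induction on $k$. In the base case $k = \mathsf{nil}$, nothing can be $<^L$-smaller than the empty list, because the constructor $\mathsf{step}$ always produces a larger list of the shape $l_1 \cons x \cons l_3$, which is non-empty. Thus $\mathsf{nil}$ is vacuously $<^L$-accessible. In the inductive case $k = a' \cons k'$, the assumption $k <^{\mathsf{all}} a$ gives $a' < a$ together with $k' <^{\mathsf{all}} a$. From the outer accessibility induction hypothesis applied to $a'$, the singleton $[a']$ is $<^L$-accessible, and from the inner structural induction hypothesis applied to $k'$, the tail $k'$ is $<^L$-accessible as well. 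By \cref{lem:sum-accessible}, the concatenation $[a'] \cons k' = k$ is $<^L$-accessible, closing the induction.

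I do not foresee a serious obstacle here, as the argument is a textbook nested induction in the style of Nipkow's proof for multisets. The only subtlety worth spelling out is the observation that a list smaller than a singleton $[a]$ must be obtained by replacing the unique entry (so the prefix and suffix from the constructor $\mathsf{step}$ are necessarily empty). Everything else is a clean combination of the outer accessibility induction on $a$, a structural induction on the candidate list $k$, and one application of \cref{lem:sum-accessible}.
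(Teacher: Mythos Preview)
Your proof is correct and follows essentially the same approach as the paper: accessibility induction on $a$, then showing that any $l <^L [a]$ decomposes as a list of elements all below $a$, each of whose singletons is accessible by the induction hypothesis, and finally combining via \cref{lem:sum-accessible}. The paper phrases the last step more tersely (writing $l = [a_1,\ldots,a_n]$ and invoking \cref{lem:sum-accessible} directly on the concatenation of singletons), whereas you spell out the implicit structural induction on $k$ and the base case for $\mathsf{nil}$, but the argument is the same.
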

\begin{proof}
 We do $<$-induction on $a$.
 It is enough to show that $l \equiv [a_1, a_2, \ldots, a_n]$ is $<^L$-accessible for any $l <^L [a]$.
 By the induction hypothesis, every $[a_i]$ is $<^L$-accessible.
 By \cref{lem:sum-accessible}, the whole list is $<^L$-accessible.
\end{proof}

\begin{lemma} \label{thm:ll-wf}
 If $<$ is well-founded, then so is $<^L$.
\end{lemma}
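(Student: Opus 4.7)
The plan is to prove well-foundedness of $<^L$ by structural induction on lists, reducing each case to the two preceding lemmas together with the well-foundedness of $<$.

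First I would handle the base case: the empty list $\mathsf{nil}$ is $<^L$-accessible because there is no list $k$ with $k <^L \mathsf{nil}$. Indeed, the only constructor of $<^L$ produces a right-hand side of the form $l_1 \cons x \cons l_3$, which has length at least one and thus cannot be $\mathsf{nil}$. So vacuously every $k <^L \mathsf{nil}$ is $<^L$-accessible, and the $\mathsf{step}$ constructor of $\acc{<^L}$ gives $\acc{<^L}(\mathsf{nil})$.

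For the step case, suppose $l : \List\, A$ is $<^L$-accessible and $a : A$; I would show that $a \cons l$ is $<^L$-accessible. Since $<$ is well-founded, $a$ is $<$-accessible, so by \cref{lem:singleton-list-accessible} the singleton $[a]$ is $<^L$-accessible. Combining this with the induction hypothesis via \cref{lem:sum-accessible}, the concatenation $[a] \cons l$, which is definitionally $a \cons l$, is $<^L$-accessible. Putting the two cases together by induction on the list, every list is $<^L$-accessible, which is the definition of well-foundedness of $<^L$.

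I do not expect any real obstacle here; the proof is essentially a bookkeeping exercise, because all of the genuine work has been packaged into \cref{lem:sum-accessible} and \cref{lem:singleton-list-accessible}. The only subtle point is the base case, where one has to inspect the single $\mathsf{step}$ constructor of $<^L$ to confirm that $\mathsf{nil}$ has no predecessors; once that is observed, the induction runs smoothly and uses the well-foundedness hypothesis exactly once, namely to feed $a$ into \cref{lem:singleton-list-accessible}.
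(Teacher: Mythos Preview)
Your proposal is correct and follows essentially the same approach as the paper: both decompose an arbitrary list into singletons, appeal to \cref{lem:singleton-list-accessible} for each singleton, and glue the pieces together via \cref{lem:sum-accessible}. Your version is simply more explicit about the base case $\mathsf{nil}$, which the paper leaves implicit.
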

\begin{proof}
 Writing a given list $l$ as the concatenation of singleton lists and applying \cref{lem:singleton-list-accessible}, the list $l$ is $<^L$-accessible.
\end{proof}

\subsection{A Rotation-Invariant Relation on Lists}

\emph{Rotating} a list is defined in the obvious way:
The function
\begin{equation} \label{eq:rot}
 \rotate : \List \, A \to \List \, A
\end{equation}
removes the first element of a list (if it exists) and adds it at the end of the list (left rotation; we do not need right rotation).
Rotating is also known as \emph{cyclic permutation}.
We now define a version of $<^L$ which does not discriminate between lists that are the same module rotation.

\begin{definition}
 Given the relation $<$ on $A$,
 we define the relation $<^{\smallerhexagon}$ on $\List\, A$ 
 by setting
 \begin{equation}
  (k <^{\smallerhexagon} l) \defeq \Sigma (n : \N). \rotate^n(k) <^L l.
 \end{equation}
\end{definition}

By definition, the relation $<^{\smallerhexagon}$ is invariant under arbitrarily rotating the smaller list.
We also have this property for the larger list:
\begin{lemma} \label{lem:rotation-invariant}
 If we have $k <^{\smallerhexagon} l$, then we also have that $k <^{\smallerhexagon} \rotate^m(l)$ for any natural number $m$.
\end{lemma}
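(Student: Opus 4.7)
The plan is to reduce the claim to the case $m = 1$ by an obvious induction on $m$ (the base case $m = 0$ is immediate since $\rotate^0(l) = l$, and the step uses the inductive hypothesis composed with the $m=1$ case applied to $\rotate^m(l)$). So it suffices to prove: if $k <^{\smallerhexagon} l$, then $k <^{\smallerhexagon} \rotate(l)$.

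Unfolding the hypothesis, we get some $n : \N$ with $\rotate^n(k) <^L l$. Inverting the single constructor $\mathsf{step}$ of $<^L$ yields $l_1, l_2 : \List\, A$, an element $x : A$ and $k_0 : \List\, A$ such that $l = l_1 \cons x \cons l_2$, $\rotate^n(k) = l_1 \cons k_0 \cons l_2$, and $k_0 <^{\mathsf{all}} x$. I now case split on whether $l_1$ is empty, because the behaviour of $\rotate$ is governed by what sits at the very front of the list.

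If $l_1$ is non-empty, write $l_1 = a \cons l_1'$. Then $\rotate(l) = l_1' \cons x \cons l_2 \cons [a]$ and correspondingly $\rotate^{n+1}(k) = l_1' \cons k_0 \cons l_2 \cons [a]$, so the same replacement of $x$ by $k_0$ witnesses $\rotate^{n+1}(k) <^L \rotate(l)$, and hence $k <^{\smallerhexagon} \rotate(l)$. If instead $l_1$ is empty, then $l = x \cons l_2$, so $\rotate(l) = l_2 \cons [x]$, while $\rotate^n(k) = k_0 \cons l_2$. Setting $n' \defeq n + \mathsf{length}(k_0)$ and using the elementary identity $\rotate^{\mathsf{length}(k_0)}(k_0 \cons l_2) = l_2 \cons k_0$, we obtain $\rotate^{n'}(k) = l_2 \cons k_0$. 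This is exactly $\rotate(l) = l_2 \cons [x]$ with the final element $x$ replaced by $k_0$, so $\rotate^{n'}(k) <^L \rotate(l)$ by the same constructor, and again $k <^{\smallerhexagon} \rotate(l)$.

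The argument is largely bookkeeping; the only non-trivial point is the second subcase, where the position of the replacement in $l$ is precisely the element that gets rotated to the back. This is where we must adjust the rotation exponent of $k$ in a non-obvious way (by $\mathsf{length}(k_0)$ rather than by $1$), and where one needs the little lemma about iterated rotation of a concatenation. Everything else follows by straightforward induction on the list structure.
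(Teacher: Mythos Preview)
Your proof is correct and follows essentially the same approach as the paper's: reduce to $m=1$, unfold the constructor of $<^L$, and case-split on whether $l_1$ is empty, rotating the smaller list once in the non-empty case and $\mathsf{length}(k_0)$ times in the empty case. The paper's proof is terser but structurally identical; your version simply makes the bookkeeping (in particular the identity $\rotate^{\mathsf{length}(k_0)}(k_0 \cons l_2) = l_2 \cons k_0$) explicit.
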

\begin{proof}
 It is enough to show the statement for $m \equiv 1$.
 Let us assume that we have a number $n$ such that $\rotate^n(k) <^L l$.
 By definition of $<^L$, the latter expression can be written as 
 \begin{equation}
  (l_1 \cons l_2 \cons l_3) <^L (l_1 \cons x \cons l_3).
 \end{equation}
 If we rotate the right-hand side once, then there are two cases: If $l_1$ is non-empty, then rotating the left-hand side once gives again an instance of $<^L$.
 If $l_1$ is the empty list, then rotating the left-hand side $\mathsf{length}(l_2)$ times yields an instance of $<^L$.
\end{proof}

\begin{corollary}\label{cor:l-rotl-same-predecessors}
 A list $l$ is $<^{\smallerhexagon}$-accessible if and only if $\rotate(l)$ is. \qed
\end{corollary}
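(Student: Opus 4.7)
The plan is to show that $l$ and $\rotate(l)$ have exactly the same $<^{\smallerhexagon}$-predecessors; once this is established, the corollary falls out in one step from the $\mathsf{step}$ constructor of accessibility, with no further induction needed.

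First, I would observe that the forward implication ``$k <^{\smallerhexagon} l$ implies $k <^{\smallerhexagon} \rotate(l)$'' is literally the $m = 1$ instance of \cref{lem:rotation-invariant}. For the converse ``$k <^{\smallerhexagon} \rotate(l)$ implies $k <^{\smallerhexagon} l$'', I would apply the same lemma with $\rotate(l)$ in place of $l$: it yields $k <^{\smallerhexagon} \rotate^m(\rotate(l)) = \rotate^{m+1}(l)$ for every natural number $m$. Choosing $m \defeq \mathsf{length}(l) - 1$ and using the elementary fact that $\rotate^{\mathsf{length}(l)}(l) = l$ (verified by an easy induction on list length) collapses the right-hand side back to $l$. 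The case where $l$ is empty is immediate since then $\rotate(l) = l$.

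With the predecessor sets identified, the equivalence $\acc{<^{\smallerhexagon}}(l) \leftrightarrow \acc{<^{\smallerhexagon}}(\rotate(l))$ is obtained without invoking any form of induction: given $\acc{<^{\smallerhexagon}}(l)$, a single application of the $\mathsf{step}$ constructor at $\rotate(l)$ suffices, since any $k <^{\smallerhexagon} \rotate(l)$ is, by the equivalence just shown, also a $<^{\smallerhexagon}$-predecessor of $l$ and therefore accessible by hypothesis; the other direction is symmetric.

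I do not anticipate any significant obstacle: the corollary is essentially a repackaging of \cref{lem:rotation-invariant}. The only mildly non-trivial ingredient is the bookkeeping around $\rotate^n = \id$ on lists of length $n$, which is routine.
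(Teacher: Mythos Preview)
Your proposal is correct and matches the paper's intent: the corollary is marked with a bare \qed{} immediately after \cref{lem:rotation-invariant}, and your argument---showing that $l$ and $\rotate(l)$ share the same $<^{\smallerhexagon}$-predecessors via the lemma plus the periodicity $\rotate^{\mathsf{length}(l)}(l)=l$, then transferring accessibility by a single $\mathsf{step}$---is exactly the unpacking one would expect. The only cosmetic point is that your phrase ``without invoking any form of induction'' slightly clashes with the list-length induction you use for periodicity, but you clearly mean no accessibility induction, which is accurate.
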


\begin{lemma} 
 If a list $l$ is $<^L$-accessible, then it is $<^{\smallerhexagon}$-accessible.
\end{lemma}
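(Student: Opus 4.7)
The plan is to proceed by $<^L$-accessibility induction on $l$. Concretely, we use that being $<^{\smallerhexagon}$-accessible is a proposition and apply accessibility induction with the family $P(l) \defeq \acc{<^{\smallerhexagon}}(l)$. So we fix $l$, assume all $l' <^L l$ are $<^{\smallerhexagon}$-accessible, and must show that every $k <^{\smallerhexagon} l$ is itself $<^{\smallerhexagon}$-accessible.

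By definition of $<^{\smallerhexagon}$, from $k <^{\smallerhexagon} l$ we obtain some $n : \N$ together with a witness that $\rotate^n(k) <^L l$. The induction hypothesis then directly gives us that $\rotate^n(k)$ is $<^{\smallerhexagon}$-accessible. What remains is to transport this accessibility back to $k$ itself, i.e.\ to derive $\acc{<^{\smallerhexagon}}(k)$ from $\acc{<^{\smallerhexagon}}(\rotate^n(k))$.

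For this, I would prove by ordinary induction on $n$ the auxiliary fact that, for all lists $m$, $\acc{<^{\smallerhexagon}}(\rotate^n(m))$ implies $\acc{<^{\smallerhexagon}}(m)$. The base case $n=0$ is trivial, and the step case uses \cref{cor:l-rotl-same-predecessors}, which tells us that accessibility of $m$ and of $\rotate(m)$ are equivalent; iterating this equivalence $n$ times completes the argument. Applying this auxiliary fact with $m \defeq k$ finishes the proof.

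The only subtle point is making sure that the ingredients line up: the induction hypothesis of the outer accessibility induction is only available for lists obtained from $l$ via $<^L$, not $<^{\smallerhexagon}$, which is precisely why the existence of a rotation $\rotate^n(k)$ lying strictly below $l$ under $<^L$ is what makes the argument go through. There is no real obstacle here since \cref{cor:l-rotl-same-predecessors} already packages the interaction between rotation and $<^{\smallerhexagon}$-accessibility; the proof is essentially a one-step reduction using the corollary and the induction hypothesis.
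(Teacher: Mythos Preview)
Your argument is correct and matches the paper's proof essentially line for line: both perform $<^L$-accessibility induction, unfold $k <^{\smallerhexagon} l$ to obtain $\rotate^n(k) <^L l$, apply the induction hypothesis to get $\acc{<^{\smallerhexagon}}(\rotate^n(k))$, and then use \cref{cor:l-rotl-same-predecessors} to conclude $\acc{<^{\smallerhexagon}}(k)$. The only difference is that you spell out the iteration over $n$ explicitly, whereas the paper leaves that step implicit in its appeal to the corollary.
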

\begin{proof}
 We do $<^L$-accessibility induction on $l$ and assume $\acc {<^L} (l)$.
 This allows us to assume that any $k$ with $k <^L l$ is $<^{\smallerhexagon}$-accessible.

 Given any $k'$ with $k' <^{\smallerhexagon} l$, we need to show that $k'$ is $<^{\smallerhexagon}$-accessible.
 By definition, we have a number $n$ such that $\rotate^n(k') <^L l$.
 Therefore, $\rotate^n(k')$ and by \cref{cor:l-rotl-same-predecessors} is $<^{\smallerhexagon}$-accessible.
\end{proof}

\begin{corollary} \label{cor:llrot-wf}
 If the relation $<^L$ is well-founded, then so is $<^{\smallerhexagon}$.
\end{corollary}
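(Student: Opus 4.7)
The plan is to derive this corollary directly from the immediately preceding lemma, which already does the substantive work: namely, that $<^L$-accessibility of a list implies $<^{\smallerhexagon}$-accessibility. Well-foundedness is, by definition, the statement that every point is accessible, so passing from a pointwise implication between accessibility predicates to an implication between well-foundedness statements is purely formal.

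Concretely, I would proceed as follows. Assume $\mathsf{isWellFounded}(<^L)$, i.e.\ a term of type $\Pi(l : \List\, A). \acc{<^L}(l)$. To produce a term of $\mathsf{isWellFounded}(<^{\smallerhexagon})$, fix an arbitrary $l : \List\, A$; applying the assumption yields $\acc{<^L}(l)$, and then feeding this into the preceding lemma (which asserts $\acc{<^L}(l) \to \acc{<^{\smallerhexagon}}(l)$) produces $\acc{<^{\smallerhexagon}}(l)$, as required.

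Since there is no inductive argument specific to this corollary and no new combinatorial content to verify, there is no real obstacle to tackle here; the only thing to check is that the preceding lemma is applied at the right list, which is immediate. If one preferred an even more compact presentation, the proof can be written as the $\lambda$-term $\lambda l.\; \mathrm{prevLemma}(l, \mathrm{wf}(l))$, where $\mathrm{wf}$ is the assumed well-foundedness of $<^L$ and $\mathrm{prevLemma}$ is the preceding lemma.
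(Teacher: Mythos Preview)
Your proposal is correct and matches the paper's intended argument exactly: the corollary is stated without proof precisely because it follows immediately from the preceding lemma by unfolding the definition of well-foundedness, just as you describe.
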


\subsection{An Order on Cycles}

Rotating a chain is defined analogously to rotating a list. Specialising to cycles and re-using the name introduced at \eqref{eq:rot}, this gives us a function
\begin{equation}
 \rotate : \cycle < \to \cycle <.
\end{equation}
This function essentially rotates the base point of the cycle.

For a given chain, we can produce the list of its vertices and forget the actual relation.
We define this function as follows:
\begin{equation}
 \begin{aligned}
 & \pi : (a \leadsto^{s*} b) \to \List \, A \\
 & \pi (\mathsf{nil}_a) \defeq \mathsf{nil} \label{eq:pi-nil} \\
 & \pi (\consLong_{a,b,c}(f,p)) \defeq \pi(f) \cons c
 \end{aligned}
\end{equation}
Specialised to cycles, this function
\begin{equation} 
\pi : \cycle < \to \List \, A
\end{equation}
lists the vertices of a cycle.
Clearly, the functions $\rotate$ and $\pi$ commute.
Note that the function $\pi$ ignores the very first endpoint of a chain: $\pi (\mathsf{nil}_a)$ is the empty list, \emph{not} the list containing $a$.
This ensures that $\pi$ works on cycles as expected and does not list the base point twice.
(It also means that an empty cycle, although still being based at some point, has an empty list of vertices.
This awkward edge case will not be relevant.)

We can now extend the relation $<^L$ to chains and $<^{\smallerhexagon}$ to cycles, overloading symbols:
 
\begin{definition}[inherited relations on chains and cycles] \label{def:inh-relation}
 For chains $f : a <^{s*} b$ and $g : c <^{s*} d$, we define $f <^L g$ to mean $\pi(f) <^L \pi(g)$.
 For cycles $p,q : \cycle{<}$, we define $p <^{\smallerhexagon} q$ to mean $\pi(p) <^{\smallerhexagon} \pi(q)$.
\end{definition}

\begin{theorem} \label{thm:Noeth-cycle-rel}
 If $\leadsto$ is Noetherian on $A$, then $\leadsto^{+ \smallerhexagon +}$ is Noetherian on $\cycle{\leadsto}$.
\end{theorem}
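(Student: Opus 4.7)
The plan is to apply each of the preservation lemmas established so far in the obvious sequence, and then to transfer accessibility from lists to cycles via the vertex-extraction function $\pi$. Reading $\leadsto^{+\smallerhexagon+}$ according to the nesting convention of \cref{rem:nest-relation-constructions}, I need to work through three steps: first $\leadsto^+$ on $A$, then $(\leadsto^+)^{\smallerhexagon}$ on lists (and hence on cycles via \cref{def:inh-relation}), then finally the transitive closure.

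Concretely, I would start by invoking \cref{cor:+-Noetherian} on the hypothesis: since $\leadsto$ is Noetherian on $A$, so is $\leadsto^+$. Next, \cref{thm:ll-wf} (applied to the opposite relation, matching the Noetherian convention) gives that $(\leadsto^+)^L$ is Noetherian on $\List\, A$, and then \cref{cor:llrot-wf} upgrades this to rotation-invariance: $(\leadsto^+)^{\smallerhexagon}$ is Noetherian on $\List\, A$.

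The only step that needs a genuine new argument is transferring Noetherianness along $\pi$ from $\List\, A$ to $\cycle{\leadsto}$. Since \cref{def:inh-relation} defines the cycle-level relation $p <^{\smallerhexagon} q$ as $\pi(p) <^{\smallerhexagon} \pi(q)$, this amounts to the general fact that accessibility pulls back along arbitrary functions. Formally, I would prove by accessibility induction on $\ell : \List\, A$ that
\begin{equation}
 \Pi(q : \cycle{\leadsto}). (\pi(q) = \ell) \to \acc{(\leadsto^+)^{\smallerhexagon}}(q)\text{:}
\end{equation}
the inductive step unfolds accessibility of $q$, and given any $p$ with $p <^{\smallerhexagon} q$ in cycles, one has $\pi(p) <^{\smallerhexagon} \pi(q) = \ell$ in lists, so the induction hypothesis applied to $\pi(p)$ and $p$ yields $\acc{(\leadsto^+)^{\smallerhexagon}}(p)$. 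This is entirely routine but is the only ``new'' piece of combinatorics.

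With $(\leadsto^+)^{\smallerhexagon}$ now known to be Noetherian on $\cycle{\leadsto}$, a final application of \cref{cor:+-Noetherian} to the cycle relation gives that $((\leadsto^+)^{\smallerhexagon})^+ = \leadsto^{+\smallerhexagon+}$ is Noetherian on $\cycle{\leadsto}$, completing the proof. I do not expect any real obstacle: each step is a direct invocation of an already-proven lemma, and the one non-mechanical step (pullback of accessibility along $\pi$) is a standard accessibility induction.
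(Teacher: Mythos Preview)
Your proposal is correct and follows essentially the same route as the paper, which simply cites \cref{thm:ll-wf}, \cref{cor:+-Noetherian}, and \cref{cor:llrot-wf} without further comment. The only difference is that you make explicit the transfer of accessibility along $\pi$ from $\List\,A$ to $\cycle{\leadsto}$; the paper leaves this implicit because the cycle relation is \emph{defined} as the pullback along $\pi$ (\cref{def:inh-relation}), so well-foundedness descends automatically by the standard argument you spell out.
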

\begin{proof}
 By combining \cref{thm:ll-wf} with \cref{cor:+-Noetherian,cor:llrot-wf}.
\end{proof}

\subsection{Constructions with Noetherian Relations on Cycles}

\begin{definition}[span]
 Let $\gamma : \cycle \leadsto$ be a cycle.
 We say that $\gamma$ \emph{contains a span} if we find consecutive vertices $v \toleads u \leadsto w$, i.e.\ if we have
 \begin{equation} \label{eq:span-filter}
  \gamma = (\alpha \cons s^{-1} \cons t \cons \beta)
 \end{equation}
 for some $\alpha : a \leadsto^{s*} v$, $s : u \leadsto v$, $t : u \leadsto w$, $\beta : w \leadsto^{s*} a$.
\end{definition}

The very intuitive statement of the next lemma is illustrated in \cref{fig:cospan-cycle} below.
\begin{lemma} \label{lem:span-filter}
 If a relation $\leadsto$ is well-founded (or Noetherian), then any cycle $\gamma : \cycle \leadsto$ is either empty or contains a span.
\end{lemma}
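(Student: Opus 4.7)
The plan is to read the sequence of $\inl$ (forward) and $\inr$ (inverted) tags of the segments of $\gamma$, in chain order from the basepoint. I would first search for any interior position where an $\inr$-segment is immediately followed by an $\inl$-segment: at such a position $\gamma$ decomposes literally as $\alpha \cons s^{-1} \cons t \cons \beta$, which is precisely the span datum the definition asks for. If no such interior transition exists, the tag sequence must be of the form $\inl^k \consPrime \inr^{n-k}$ for some $0 \le k \le n$.

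In the two pure sub-cases $k = n$ and $k = 0$, composing the $n \ge 1$ segments in cycle order (respectively, in reverse) yields a term of type $a \leadsto^+ a$. Since $\leadsto$ is well-founded (or Noetherian), so is $\leadsto^+$ by \cref{cor:+-Noetherian}; \cref{lem:a<a-no-acc} then rules out $a \leadsto^+ a$, so these cases cannot actually arise. In the genuinely mixed sub-case $0 < k < n$, the first segment $\inl(q)$ with $q : a \leadsto a_1$ and the last segment $\inr(p)$ with $p : a \leadsto a_{n-1}$ together exhibit a span $a_{n-1} \toleads a \leadsto a_1$ rooted at the basepoint. I would turn this into the literal decomposition by rotating $\gamma$ by $k$ positions using the $\rotate$ operation from \cref{sec:order-cycles}: in the rotated cycle the $\inr$-block now precedes the $\inl$-block, and the desired $\inr\inl$-transition sits strictly interior, supplying $\alpha$, $s^{-1}$, $t$, and $\beta$ directly.

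The main obstacle is exactly this wrap-around sub-case: the literal shape $\alpha \cons s^{-1} \cons t \cons \beta$ demands an interior tag transition, so a span that only meets at the basepoint first has to be brought inside by a rotation step. A cleaner alternative I would try in parallel is to pick among the vertices occurring in $\gamma$ one that is $\leadsto$-minimal (using well-foundedness of $\leadsto$ to select it from the finite list $\pi(\gamma)$); at such a vertex both incident segments must point outward along $\leadsto$ and hence already form a span, reducing the proof to the same rotation issue only in the degenerate situation where the chosen vertex coincides with the basepoint.
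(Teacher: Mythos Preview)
Your core approach---show that the cycle is not monotone by ruling out $a \leadsto^+ a$ via \cref{lem:acc-trans-acc} and \cref{lem:a<a-no-acc}, then locate a direction change---is exactly the paper's, which compresses all of this into the single phrase ``find a span by going over the vertices of the cycle''. You go further by spelling out the tag-sequence case analysis and correctly isolating the wrap-around case as the only delicate one.

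Your fix for that case, however, does not prove the statement as written. Rotating $\gamma$ produces a genuinely different cycle with a different basepoint, and a decomposition $\rotate^k(\gamma) = \alpha \cons s^{-1} \cons t \cons \beta$ is not a decomposition of $\gamma$. Concretely, for a single $s : a \leadsto b$, the two-segment cycle $\inl(s) \cons \inr(s)$ based at $a$ has tag pattern $\inl\,\inr$ and admits no decomposition of the required shape, while its rotation based at $b$ does---but that does not give the required equality for $\gamma$ itself. Your alternative via a $\leadsto$-minimal vertex hits the same wall when that vertex is the basepoint, as you already observe. In fairness, the paper is equally terse on this point (and its illustrating figure even counts a span through the basepoint), and since the sole use of the lemma in \cref{lem:write-cycle-as-merge} immediately rotates anyway, what your argument actually establishes---that \emph{some rotation} of $\gamma$ contains a span---is precisely what is needed downstream.
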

\begin{proof}
 If $\alpha$ is empty, we are done.
 Otherwise, by \cref{lem:acc-trans-acc} combined with \cref{lem:a<a-no-acc}, $\gamma$ is not monotone in the sense of \cref{def:chains-cycles}, i.e.\ 
 not all segments of $\gamma$ go into the same direction.
 This means that we can find a span by going over the vertices of the cycle.
\end{proof}

\begin{figure}[H] 
\newdimen\R
\newdimen\S
\R=1.4cm
\newcommand{\rot}{-180}
\newcommand{\srot}{-180}

\begin{minipage}[t]{0.25\textwidth}
\begin{center}
\begin{tikzpicture}[baseline=(current bounding box.center)]
  \tikzset{arrow/.style={shorten >=0.1cm,shorten <=.1cm,-latex}}

\node (P1) at (\rot-0:\R) {$a_0$}; 
\node (P2) at (\rot-72:\R) {$a_1$}; 
\node (P3) at (\rot-144:\R) {$a_2$}; 
\node (P4) at (\rot-216:\R) {$a_3$}; 
\node (P5) at (\rot-288:\R) {$a_4$}; 

\draw[arrow] (P1) to node [below] {} (P2);
\draw[arrow] (P2) to node [below] {} (P3);
\draw[arrow] (P4) to node [left] {} (P3);
\draw[arrow] (P4) to node [left] {} (P5);
\draw[arrow] (P1) to node [left] {} (P5);
\end{tikzpicture}
\end{center}
\end{minipage}%
\begin{minipage}[t]{0.25\textwidth}
\begin{center}
\begin{tikzpicture}[baseline=(current bounding box.center)]
  \tikzset{arrow/.style={shorten >=0.1cm,shorten <=.1cm,-latex}}

\node (P1) at (\srot-0:\R) {$b_0$}; 
\node (P2) at (\srot-72:\R) {$b_1$}; 
\node (P3) at (\srot-144:\R) {$b_2$}; 
\node (P4) at (\srot-216:\R) {$b_3$}; 
\node (P5) at (\srot-288:\R) {$b_4$}; 

\draw[arrow] (P1) to node [below] {} (P2);
\draw[arrow] (P2) to node [below] {} (P3);
\draw[arrow] (P3) to node [left] {} (P4);
\draw[arrow] (P4) to node [left] {} (P5);
\draw[arrow] (P5) to node [left] {} (P1);
\end{tikzpicture}
\end{center}
\end{minipage}

\caption{Two elements of $\cycle <$.
The left cycle consists of points $a_i : A$ and contains two spans, namely ${a_4 \protect\toleads a_0 \leadsto a_1}$ and $a_2 \protect\toleads a_3 \leadsto a_4$.
The right cycle does not contain any span; this is not possible if the relation is Noetherian.%
}\label{fig:cospan-cycle}
\end{figure}
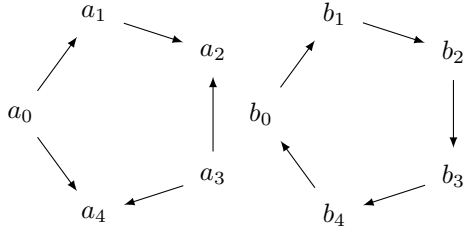

\begin{definition}[merge of cycles] \label{def:merge}
 Assume we have $a,b : A$ and $\alpha, \beta, \gamma : a \sim^{*} b$.
 We say that the cycle $\alpha \cons \gamma^{-1}$ is the \emph{merge} of the cycles
 $\alpha \cons \beta^{-1}$ and $\beta \cons \gamma^{-1}$.
\end{definition}

Merging is also known as \emph{pasting} \cite{vincent-16} or \emph{composition} \cite{vincent-08}.


\begin{figure}[h] 
\begin{center}
\begin{tikzpicture}[x=.6cm,y=-.6cm,baseline=(current bounding box.center)]
  \tikzset{arrow/.style={shorten >=0.1cm,shorten <=.1cm,-latex}}

\node (P1) at (0,0) {$a$}; 
\node (P2) at (2,1) {$a_1$}; 
\node (P3) at (4,0) {$a_2$}; 
\node (P4) at (6,-1) {$a_3$}; 
\node (P5) at (8,0) {$b$}; 

\node (Q1) at (-1,-3) {$b_1$}; 
\node (Q2) at (1,-5) {$b_2$}; 
\node (Q3) at (3,-4) {$b_3$}; 
\node (Q4) at (6,-4) {$b_4$}; 
\node (Q5) at (9,-2) {$b_5$}; 

\node (R1) at (0,3) {$c_1$}; 
\node (R2) at (2,4) {$c_2$}; 
\node (R3) at (5,4) {$c_3$}; 
\node (R4) at (7,2) {$c_4$}; 

\node (LABEL1) at (4.5,-4.5) {$\alpha$}; 
\node (LABEL2) at (3,-.25) {$\beta$}; 
\node (LABEL3) at (3.5,3.5) {$\gamma$}; 

\node (LABEL4) at (2.5,-2) {$\alpha \cons \beta^{-1}$}; 
\node (LABEL5) at (4.5,2) {$\beta \cons \gamma^{-1}$};

\draw[arrow] (P1) to node [below] {} (P2);
\draw[arrow] (P2) to node [below] {} (P3);
\draw[arrow] (P4) to node [left] {} (P3);
\draw[arrow] (P4) to node [left] {} (P5);

\draw[arrow] (Q1) to node [below] {} (P1);
\draw[arrow] (Q1) to node [below] {} (Q2);
\draw[arrow] (Q3) to node [below] {} (Q2);
\draw[arrow] (Q3) to node [below] {} (Q4);
\draw[arrow] (Q4) to node [below] {} (Q5);
\draw[arrow] (Q5) to node [below] {} (P5);

\draw[arrow] (P1) to node [below] {} (R1);
\draw[arrow] (R1) to node [below] {} (R2);
\draw[arrow] (R2) to node [below] {} (R3);
\draw[arrow] (R4) to node [below] {} (R3);
\draw[arrow] (P5) to node [below] {} (R4);

\end{tikzpicture}
\end{center}
\caption{The \emph{merge} of cycles as in \cref{def:merge}.
The big cycle is the merge of the two smaller ones.
Technically, any of the three cycles is the merge of the other two.
}\label{fig:merge}
\end{figure}
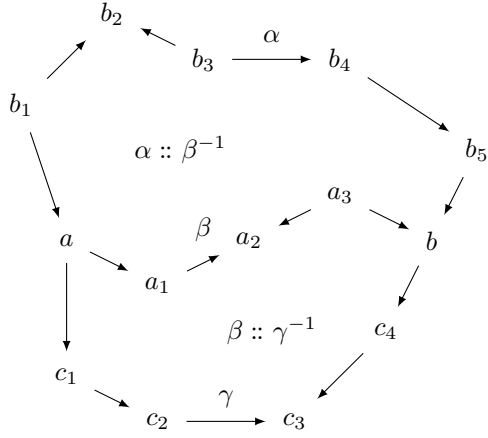

Let $\leadsto$ be locally confluent.
By \cref{def:confluence}, we get a map from spans to extended cospans.
By concatenating the original span with (the reversed version of) this extended cospan, we get a cycle.
We denote this function from spans to cycles by
\begin{equation} \label{eq:spantocycle}
 \spantocycle: (\cdot \toleads \cdot \leadsto \cdot) \to \cycle \leadsto
\end{equation}
and call a cycle of the form $\spantocycle(\kappa)$ a \emph{confluence cycle}.%
\footnote{The more precise but also longer description would be \emph{local confluence cycle}, or \emph{local confluence diagram}.}

\begin{lemma} \label{lem:write-cycle-as-merge}
 Let $\leadsto$ be Noetherian and locally confluent.
 Any cycle $\gamma$ is either empty or there is a number $n$ such that the rotation $\rotate^n(\gamma)$ 
 can be written as the merge of a confluence cycle and a cycle $\delta$ with $\gamma \leadsto^{+\smallerhexagon+} \delta$.
\end{lemma}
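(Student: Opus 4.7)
The plan is to inspect a span inside $\gamma$ and ``fill'' it using local confluence. If $\gamma$ has length zero, we are done, so assume otherwise. By \cref{lem:span-filter}, the Noetherian hypothesis forces $\gamma$ to contain a span $v \toleads u \leadsto w$; choosing the rotation that places this span at the very start, I write $\gamma' \defeq \rotate^n(\gamma)$ in the form
\begin{equation*}
 \gamma' = s^{-1} \cons t \cons \beta \cons \alpha,
\end{equation*}
where $s : u \leadsto v$, $t : u \leadsto w$, and $\beta \cons \alpha : w \leadsto^{s*} v$; the cycle $\gamma'$ is based at $v$.

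Next, I apply local confluence to obtain $p : v \leadsto^* x$ and $q : w \leadsto^* x$. The associated confluence cycle is $\spantocycle(u,v,w,s,t) = s^{-1} \cons t \cons q \cons p^{-1}$, and I set
\begin{equation*}
 \delta \defeq p \cons q^{-1} \cons \beta \cons \alpha.
\end{equation*}
Both $s^{-1} \cons t$ and $p \cons q^{-1}$ are chains from $v$ to $w$, while $\beta \cons \alpha$ is a chain from $w$ back to $v$. Applying \cref{def:merge} with the common midpoint $w$ (as the two paths) and $\beta \cons \alpha$ as the shared tail, it is immediate that $\gamma'$ is the merge of the confluence cycle and $\delta$.

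It remains to verify $\gamma \leadsto^{+\smallerhexagon+} \delta$. Since $<^{\smallerhexagon}$ is invariant under rotating either argument (by \cref{lem:rotation-invariant} and \cref{cor:l-rotl-same-predecessors}), it suffices to compare $\gamma'$ with $\delta$. Using that $\pi$ respects chain concatenation, I obtain $\pi(\gamma') = [u, w] \cons \pi(\beta) \cons \pi(\alpha)$ and $\pi(\delta) = \pi(p) \cons \pi(q^{-1}) \cons \pi(\beta) \cons \pi(\alpha)$. Passing from the former to the latter amounts to replacing the single element $u$ by the interior vertices of $p \cons q^{-1}$, keeping the trailing $w$; each of these replacement vertices is $\leadsto^+$-reachable from $u$ (via $v$ and $p$, or via $w$ and $q$). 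This is a single $<^L$-step in the relation $\leadsto^+$, hence a $<^{\smallerhexagon}$-step on cycles, which supplies one step of $\leadsto^{+\smallerhexagon+}$.

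The step that I expect to require the most care is the degenerate case $v = w$ with both $p$ and $q$ empty, in which the trailing copy of $w = v$ sitting between the span and the common tail also has to be absorbed, so that two successive $<^L$-steps are needed instead of one. This is harmless since the outer transitive closure in $\leadsto^{+\smallerhexagon+}$ accommodates any positive number of $<^{\smallerhexagon}$-steps, but it is the sort of vertex bookkeeping that makes the formal development noticeably more tedious than the paper argument suggests.
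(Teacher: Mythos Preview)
Your argument is correct and follows essentially the same route as the paper: locate a span via \cref{lem:span-filter}, rotate it to the front, replace the span by the extended cospan supplied by local confluence, and observe that the resulting cycle $\delta$ is $\leadsto^{+\smallerhexagon+}$-smaller because the peak vertex $u$ is replaced by vertices that are all $\leadsto^+$-reachable from it. Your explicit vertex-list computation and your identification of the degenerate case (both legs of the cospan empty, forcing two $<^L$-steps and hence the outer transitive closure) match the paper's proof and the accompanying remark exactly; the only minor quibble is that \cref{cor:l-rotl-same-predecessors} concerns accessibility rather than the relation itself, so the rotation-invariance you need comes entirely from \cref{lem:rotation-invariant} together with the fact that rotation is invertible on cycles of fixed length.
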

\begin{proof}
 If $\gamma$ is non-empty then, by \cref{lem:span-filter}, it contains a span.
 We can then write $\gamma$ as given by \eqref{eq:span-filter}.
 By rotating, we can bring the cycle into the form $\rotate^n(\gamma) = (\kappa \cons \tau^{-1})$ with $\kappa : v \toleads u \leadsto w$ being the span and $\tau : v \leadsto^{s*} w$ being a chain.
 By local confluence, there is a chain $\alpha : v \leadsto^{s*} w$ which is smaller than the span in the sense that $\kappa \leadsto^{+L+} \alpha$. 
 We define $\delta \defeq \alpha \circ \tau^{-1}$ and have $\rotate^n(\gamma) \leadsto^{+\smallerhexagon+} \delta$, thus by \cref{lem:rotation-invariant} $\alpha \leadsto^{+\smallerhexagon+} \delta$ as required.
\end{proof}

\begin{remark}
 With the construction in the above proof, we have in almost all cases $\gamma \leadsto^{+\smallerhexagon} \delta$.
 Only if the chain $\alpha$ is empty, which may happen if $v = w$, we need two steps from $\gamma$ to $\delta$.
For this reason, the lemma states $\gamma \leadsto^{+\smallerhexagon +} \delta$
\end{remark}

\section{Families over Cycles} \label{sec:fam-of-cycles}

We move on from studying cycles to exploring type families indexed over cycles.

\begin{definition}[stability under merging and rotating]
 Assume that $Q : \cycle \leadsto \to \UU$ is a family of types.
 We say that $Q$ is \emph{stable under merging} if, whenever $\gamma$ is the merge of cycles $\alpha$ and $\beta$ such that we have $Q(\alpha)$ and $Q(\beta)$, we also have $Q(\gamma)$. 
 We say that $Q$ is \emph{stable under rotating} if, for any cycle $\delta$ such that $Q(\delta)$, we also have $Q(\rotate(\delta))$.
\end{definition}

\begin{theorem}[Noetherian cycle induction] \label{thm:cycle-ind}
 Let $\leadsto$ be Noetherian and locally confluent.
 Assume further that $Q : \cycle \leadsto \to \UU$ is stable under merging and rotating.
 If $Q$ is inhabited at every empty cycle and at every confluence cycle, then $Q$ is inhabited everywhere:
 \begin{equation}
  \begin{alignedat}{1}
             & \Pi(\gamma : \biggerhexagon^\leadsto_\emptyset). Q(\gamma) \\
   \to \quad & \Pi(\kappa : \cdot \toleads \cdot \leadsto \cdot). Q(\spantocycle(\kappa)) \\
   \to \quad & \Pi(\alpha : \cycle \leadsto). Q(\alpha).
  \end{alignedat}
 \end{equation}
 If $A$ is a set, then the first line of the above principle can be replaced and it can be stated as follows:
 \begin{equation}
  \begin{alignedat}{1}
   & \Pi(a:A). Q(\epsilon_a) \\
   \to \quad & \Pi(\kappa : \cdot \toleads \cdot \leadsto \cdot). Q(\spantocycle(\kappa)) \\
   \to \quad & \Pi(\alpha : \cycle \leadsto). Q(\alpha).
  \end{alignedat}
 \end{equation}
\end{theorem}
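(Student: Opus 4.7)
The plan is to do Noetherian induction on cycles using the relation $\leadsto^{+\smallerhexagon+}$, which is Noetherian by \cref{thm:Noeth-cycle-rel}. Concretely, I fix $Q$ satisfying the two hypotheses and apply well-founded induction with respect to the opposite of $\leadsto^{+\smallerhexagon+}$ on $\cycle{\leadsto}$, so the induction hypothesis allows me to assume $Q(\delta)$ for every cycle $\delta$ with $\gamma \leadsto^{+\smallerhexagon+} \delta$.

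Given an arbitrary $\gamma : \cycle{\leadsto}$, I invoke \cref{lem:write-cycle-as-merge}. In the empty case, $\gamma$ is already in the domain of the first hypothesis. In the non-empty case, I obtain $n : \N$, a span $\kappa$, and a cycle $\delta$ such that $\rotate^n(\gamma)$ is the merge of the confluence cycle $\spantocycle(\kappa)$ with $\delta$, and such that $\gamma \leadsto^{+\smallerhexagon+} \delta$. The confluence-cycle hypothesis yields $Q(\spantocycle(\kappa))$, while the induction hypothesis yields $Q(\delta)$. Stability under merging then gives $Q(\rotate^n(\gamma))$.

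From here I still need $Q(\gamma)$, and this is the step I expect to be the main technical obstacle. My plan is to show that rotation is cyclic in the sense that $\rotate^{L}$ applied to a cycle of $\pi$-length $L$ is (propositionally) the original cycle, and then apply stability under rotating $L - n$ more times to transport $Q(\rotate^n(\gamma))$ along the remaining rotations back to $Q(\rotate^L(\gamma)) = Q(\gamma)$. (An alternative that avoids a modular-arithmetic argument is to strengthen the induction hypothesis to a statement like ``$Q$ holds on every rotation of $\gamma$'', and verify that both the empty-cycle and confluence-cycle hypotheses together with merge-stability are compatible with that strengthening; I would pick whichever version is cleaner in Lean.)

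For the final clause, suppose $A$ is a set. By \cref{rem:empty-versus-trivial}, the type of empty cycles is equivalent to $\Sigma(a : A).(a = a)$, and since $A$ is a set every self-equality is $\refl$, so every empty cycle is definitionally of the form $\epsilon_a$. Consequently the first premise $\Pi(\gamma : \biggerhexagon^{\leadsto}_{\emptyset}).Q(\gamma)$ is equivalent to $\Pi(a : A).Q(\epsilon_a)$, which lets me substitute the simpler hypothesis into the already established principle.
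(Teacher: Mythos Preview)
Your proposal is correct and follows essentially the same route as the paper: Noetherian induction with respect to $\leadsto^{+\smallerhexagon+}$ (via \cref{thm:Noeth-cycle-rel}), then \cref{lem:write-cycle-as-merge} to split into the empty case and the merge of a confluence cycle with a $\leadsto^{+\smallerhexagon+}$-smaller cycle, followed by \cref{rem:empty-versus-trivial} for the set case. The paper's proof is terser and leaves implicit the step you flag as the ``main technical obstacle'', namely passing from $Q(\rotate^n(\gamma))$ back to $Q(\gamma)$ using that $\rotate$ has period equal to the length of the cycle; your explicit treatment of this via $\rotate^{L}(\gamma)=\gamma$ is exactly what is needed to fill that gap.
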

\begin{proof}
 The relation $\leadsto^{+\smallerhexagon+}$ is Noetherian by \cref{thm:Noeth-cycle-rel}.
 We perform Noetherian induction.
 The induction hypothesis in particular tells us that $Q$ is inhabited for the smaller cycle given by \cref{lem:write-cycle-as-merge}.
 From this and from the assumption that $Q$ is inhabited for every confluence cycle, we get the desired result.
 The version for $A$ being a set follows from \cref{rem:empty-versus-trivial}.
\end{proof}

\begin{figure}[H]
\newdimen\R
\newdimen\S
\R=2.4cm
\newcommand{\rot}{-54}

\begin{tikzpicture}[baseline=(current bounding box.center)]
  \tikzset{arrow/.style={shorten >=0.1cm,shorten <=.1cm,-latex}}

\node (P1) at (\rot-0:\R) {$a_0$}; 
\node (P2) at (\rot-45:\R) {$a_1$}; 
\node (P3) at (\rot-90:\R) {$a_2$}; 
\node (P4) at (\rot-135:\R) {$a_3$}; 
\node (P5) at (\rot-180:\R) {$a_4$}; 
\node (P6) at (\rot-225:\R) {$a_5$}; 
\node (P7) at (\rot-270:\R) {$a_6$}; 
\node (P8) at (\rot-315:\R) {$a_7$}; 

\draw[arrow] (P1) to node [below] {}  (P2);
\draw[arrow] (P2) to node [below] {}  (P3);
\draw[arrow] (P4) to node [left] {}  (P3);
\draw[arrow] (P4) to node [left] {}  (P5);
\draw[arrow] (P5) to node [above] {}  (P6);
\draw[arrow] (P7) to node [above] {}  (P6);
\draw[arrow] (P7) to node [right] {}  (P8);
\draw[arrow] (P1) to node [right] {} (P8);

\node (P4b) at (\rot-19:0.5cm) {$a_8$};
\node (P5b) at (\rot-175:1.0cm) {$a_9$};
\draw[arrow, dashed] (P3) to node [] {} (P4b);
\draw[arrow, dashed] (P5) to node [] {} (P5b);
\draw[arrow, dashed] (P5b) to node [] {} (P4b);

\node (P6b) at (\rot-255:1cm) {$a_{10}$};
\draw[arrow, dotted] (P5b) to node [] {} (P6b);
\draw[arrow, dotted] (P6) to node [] {} (P6b);
\end{tikzpicture}
\caption{An illustration of Noetherian cycle induction.
 Assume we want to show a property $Q$ for the big octagon $a_0 - a_7$.
 We first ``remove'' the confluence cycle spanned by
 $a_2 \protect\toleads a_3 \leadsto a_4$ to get the nonagon consisting of $a_0 - a_9$ without $a_3$ but with the dashed edges: we now have more vertices, but the nonagon is smaller than the octagon in the order $\leadsto^{+\protect\smallerhexagon +}$.
 In the next step, we ``remove'' the confluence cycle spanned by $a_9 \protect\toleads a_4 \leadsto a_5$, and so on.
 Which confluence cycle is removed in a certain step depends on the precise proof of \cref{lem:span-filter}.%
}\label{fig:cycle-induction}
\end{figure}
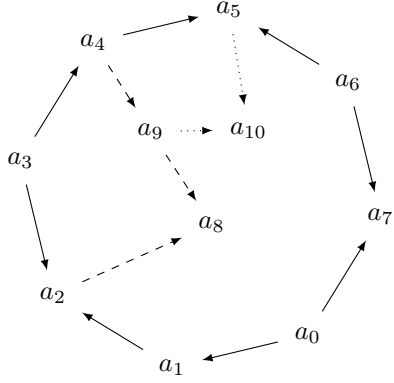

\section{Applications of Noetherian Cycle Induction} \label{sec:apps}

We are now ready to combine the theory developed in the previous two sections with our result on set-quotients.

\begin{theorem} \label{thm:dirsetquot}
 Let $A : \UU$ be a type, $\leadsto : A \to A \to \UU$ be a Noetherian and locally confluent relation, and $X : \UU$ be a 1-type.
 Then, the type of functions 
 $(A \slash \leadsto) \to X$
 is equivalent to the type of tuples $(f,h,d_1,d_2)$ (a nested $\Sigma$-type), where
 \begin{align}
  & f : A \to X \\
  & h : \Pi\{a, b : A\}. (a \leadsto b) \to f(a) = f(b) \\
  & d_1 : \Pi\{a : A\}. \Pi(p : a = a). \ap_f(p) = \refl \label{eq:dir-penu-comp} \\
  & d_2 : \Pi (\kappa : \cdot \toleads \cdot \leadsto \cdot). h^{s*} (\spantocycle(\kappa)) = \refl. \label{eq:dir-last-comp}
\end{align}
Further, if $A$ is a set, the type $(A \slash \leadsto) \to X$ is equivalent to the type of triples $(f,h,d_2)$.
\end{theorem}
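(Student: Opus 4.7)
The plan is to combine \cref{thm:gensetquotnew} with the Noetherian cycle induction principle \cref{thm:cycle-ind}. By \cref{thm:gensetquotnew}, the type $(A \sslash \leadsto) \to X$ is equivalent to the type of triples $(f,h,c)$ with $c : \Pi(\gamma : \cycle \leadsto). h^{s*}(\gamma) = \refl$. Hence, for each fixed $f$ and $h$, it suffices to establish a type equivalence between the type of $c$ and the type of pairs $(d_1, d_2)$ (or just $d_2$, in the set case).

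Define $Q : \cycle \leadsto \to \UU$ by $Q(\gamma) \defeq (h^{s*}(\gamma) = \refl)$. Since $X$ is a $1$-type, each $f(a) = f(a)$ is a set, so every $Q(\gamma)$ is a proposition. Consequently, the type of $c$ as well as the types describing $d_1$ and $d_2$ are all propositions, and I only need to prove a logical equivalence between $c$ and $(d_1,d_2)$. The direction from $c$ to $(d_1,d_2)$ is easy: $d_2$ is an instance of $c$ applied to confluence cycles; for $d_1$, apply $c$ to the empty cycle that corresponds to a loop $p : a = a$ under \cref{rem:empty-versus-trivial}, using the fact that $h^{s*}$ of this empty cycle is $\ap_f(p)$. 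This last identification follows by path induction on $p$: for $p \equiv \refl$, both sides are $\refl_{f(a)}$.

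For the converse direction, I apply \cref{thm:cycle-ind} to $Q$. Stability under rotating follows from \eqref{eq:star-inverting} and \eqref{eq:star-functoriality}, since rotating a cycle conjugates $h^{s*}(\gamma)$ by a prefix, so triviality is preserved. Stability under merging follows similarly: if $\alpha \cons \gamma^{-1}$ is the merge of $\alpha \cons \beta^{-1}$ and $\beta \cons \gamma^{-1}$ (as in \cref{def:merge}), then the assumptions $h^{s*}(\alpha) \ct h^{s*}(\beta)^{-1} = \refl$ and $h^{s*}(\beta) \ct h^{s*}(\gamma)^{-1} = \refl$ yield $h^{s*}(\alpha) = h^{s*}(\beta) = h^{s*}(\gamma)$, and hence $h^{s*}(\alpha \cons \gamma^{-1}) = \refl$. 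The hypothesis on confluence cycles is precisely $d_2$, and the hypothesis on empty cycles translates to $d_1$ via \cref{rem:empty-versus-trivial} together with the identification of $h^{s*}$ on an empty cycle with $\ap_f$ on the corresponding loop. Cycle induction then provides $c$.

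For the set case, \cref{rem:empty-versus-trivial} forces every loop $p : a = a$ to be $\refl$, so $d_1$ is vacuously satisfied and only $d_2$ remains; alternatively, use the second formulation of \cref{thm:cycle-ind} where the empty-cycle hypothesis reduces to $Q(\epsilon_a)$, which is immediate since $h^{s*}(\epsilon_a) \equiv \refl$. The main subtleties are checking stability of $Q$ under merging and rotating, and carefully identifying $h^{s*}$ on empty cycles with $\ap_f$ on the corresponding loop via \cref{rem:empty-versus-trivial}; both are routine but require unfolding the inductive definitions.
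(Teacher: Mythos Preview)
Your proposal is correct and follows essentially the same route as the paper: reduce to \cref{thm:gensetquotnew}, observe that all the relevant types are propositions (since $X$ is a $1$-type), and then use Noetherian cycle induction (\cref{thm:cycle-ind}) with $Q(\gamma) \defeq (h^{s*}(\gamma) = \refl)$ to pass between $c$ and $(d_1,d_2)$. Your treatment of the empty-cycle case via \cref{rem:empty-versus-trivial} and path induction, and of merging/rotating via \eqref{eq:star-inverting} and \eqref{eq:star-functoriality}, matches the paper's argument (which leaves some of these details to the formalisation).
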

\begin{proof}
 The case for $A$ being a set follows immediately from the main statement, since the type of $d_1$ becomes contractible.

 For the main statement,
 we want to apply \cref{thm:gensetquotnew}.
 We need to show that the type of $c$ in \eqref{eq:genset-last-comp} is equivalent to the type of pairs $(d_1,d_2)$ above.
 Note that they are all propositions.
 From $c$, we immediately derive $(d_1, d_2)$ from \cref{rem:empty-versus-trivial}.
 
 Let us assume we are given $(d_1, d_2)$. We need to derive $c$.
 We want to apply Noetherian cycle induction as given by \cref{thm:cycle-ind} with $Q(\gamma) \defeq h^{s*}(\gamma) = \refl$.
 First of all, we need to check that $Q$ is stable under merging and rotating.
 Rotating is easy and follows from the fact that left- and right-inverses of path composition coincide.
 Merging follows from associativity of path composition and (\ref{eq:star-inverting}, \ref{eq:star-functoriality}).
 We refer to our Lean formalisation for the details.
 
 Finally, we check the two assumption of \cref{thm:cycle-ind}.
 Let $\gamma$ be an empty cycle. With \cref{rem:empty-versus-trivial}, we can construct $Q(\gamma)$ from $d_1$.
 Let $\kappa$ be a span. What we need is given by $d_2$.
\end{proof}

We want to use this theorem to show that the free higher group $F_A$ has trivial fundamental groups.
Recall that this is the example discussed in the introduction, with $F_A$ defined in equation \eqref{eq:FA-definition}.

\begin{theorem}\label{thm:fundamental-group-trivial}
 The fundamental groups of the free higher group on a set are trivial.
 In other words, for a set $A$ and any $x : F_A$,
 we have
 \begin{equation}
  \pi\left(F_A , x \right) \; = \; \unit.
 \end{equation}
\end{theorem}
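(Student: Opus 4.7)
My plan is to deduce the claim from \cref{thm:dirsetquot} by showing that $\trunc 1 {F_A}$ is equivalent to the set $\List(A+A) \slash \sim$ from \cref{ex:fg}. This suffices: once $\trunc 1 {F_A}$ is known to be a set, the loop space $\Omega(\trunc 1 {F_A}, \tproj 1 x)$ is a proposition containing $\refl$ and hence contractible for any $x : F_A$, yielding
\[
 \pi(F_A, x) \; = \; \trunc 0 {\Omega(F_A, x)} \; \simeq \; \Omega(\trunc 1 {F_A}, \tproj 1 x) \; = \; \unit
\]
by the standard identification of loop spaces of truncations.

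The forward map $g : \List(A+A) \slash \sim \to \trunc 1 {F_A}$ is obtained by applying \cref{thm:dirsetquot} with $X \defeq \trunc 1 {F_A}$, using \cref{lem:fg-Noether-confl}. Since $\List(A+A)$ is a set, only $f \defeq \tproj 1 - \circ \omega_1$, $h$ obtained from $\omega_2$ by postcomposition with $\ap_{\tproj 1 -}$, and the coherence $d_2$ on confluence cycles are required. For $d_2$, the three cases in the proof of \cref{lem:fg-Noether-confl} all reduce to elementary path induction: the full-overlap case is immediate, the partial-overlap case amounts to the identity that right-inverse cancellation $(p \ct p^{-1}) \ct p = p$ and left-inverse cancellation $p \ct (p^{-1} \ct p) = p$ agree (true in any type by path induction on $p$), and the disjoint-redex case reduces to the interchange law for 2-cells supported at disjoint positions, again by path induction.

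The inverse $c : \trunc 1 {F_A} \to \List(A+A) \slash \sim$ arises from the encoding $\mathsf{enc} : F_A \to \List(A+A) \slash \sim$ defined analogously to $\Omega(\Sone) \to \Z$: a fibration $C$ over the wedge $\hcolim(A \rightrightarrows \unit)$ with $C(\mathsf{base}) \defeq \List(A+A) \slash \sim$ and $C(\mathsf{loop}(a))$ the auto-equivalence of appending $\inl(a)$, then $\mathsf{enc}(p) \defeq \transport^C(p, [\,])$; factoring through $\trunc 1 -$ gives $c$. The composition $c \circ g = \id$ follows by direct computation on representatives. The main work is in showing $g \circ c = \id$: after $\trunc 1$-induction the goal reduces to $g(\mathsf{enc}(p)) = \tproj 1 p$ for $p : F_A$, which I would prove by a standard encode-decode argument --- extending $g$ to a section $\mathsf{dec}_x : C(x) \to \trunc 1 (\mathsf{base} = x)$ (compatibility with $\mathsf{loop}(a)$-transport holds on the nose by the definition of $\omega_1$), and then verifying $\mathsf{dec}_x(\mathsf{enc}_x(p)) = \tproj 1 p$ by based path induction with $x$ variable. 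The main obstacle is coherently setting up this truncated encode-decode argument, since the decoding is only available after passing to $\trunc 1$ of the path space (the analogous untruncated decoding would be tantamount to proving $F_A$ is a set, which is the open problem).
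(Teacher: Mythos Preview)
Your argument is correct and mirrors the paper's strategy closely: reduce to showing $\trunc 1{F_A}$ is a set, build the map out of the set-quotient via \cref{thm:dirsetquot} using the confluence-cycle analysis (this is the paper's \cref{lem:fg-cycle-condition}), and produce an encode map the other way. The one substantive difference is that the paper does \emph{not} prove the full equivalence $\trunc 1{F_A} \simeq \List(A+A)\slash\leadsto$: it only shows that $\trunc 1{F_A}$ is a \emph{retract} of the set-quotient, which already forces it to be a set. Concretely, the paper factors the encode through the \emph{untruncated} coequaliser, constructing $\varphi : F_A \to \List(A+A)\sslash\leadsto$ and proving $\omega \circ \varphi = \id_{F_A}$ (both via \cref{thm:lics2019-main}); composing with $\tproj 0 -$ and the map from \cref{thm:dirsetquot} then exhibits $\tproj 1 -$ as factoring through a set. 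This corresponds exactly to your ``$g \circ c = \id$'' direction and lets the paper skip your ``$c \circ g = \id$'' computation entirely. Your route yields the stronger conclusion that $\trunc 1{F_A}$ is literally the free group $\List(A+A)\slash\leadsto$, at the cost of that extra (easy) check; and where the paper invokes \cref{thm:lics2019-main}, you use the equivalent but more elementary fibration-plus-path-induction formulation of encode--decode.
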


We split the proof into several small lemmas.
We keep using the relation $\leadsto$ of \cref{ex:fg,lem:fg-Noether-confl}.
Further, recall 
the functions $\omega_1$ \eqref{eq:omega1} and $\omega_2$ \eqref{eq:omega2} from the introduction,
as well as 
the map $\omega$ \eqref{eq:omega-complete}.

\begin{lemma}[{free group; continuing \cref{ex:fg,lem:fg-Noether-confl}}] \label{lem:fg-cycle-condition}
 For the local confluence constructed in \cref{lem:fg-Noether-confl}, we can construct a term
 \begin{equation}
  d_2 : \Pi(\kappa : \cdot \toleads \cdot \leadsto \cdot). \omega_2^{s*}(\spantocycle(\kappa)) = \refl.
 \end{equation}
\end{lemma}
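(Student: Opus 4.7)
The plan is to proceed by case analysis on the three configurations identified in the proof of \cref{lem:fg-Noether-confl}: fully overlapping, partially overlapping, and disjoint pairs of redexes. In each case the span $\kappa$ and the extended cospan constructed there determine an explicit confluence cycle, and I will check directly that $\omega_2^{s*}$ sends it to $\refl$.

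Before the case split, it is convenient to record the shape of $\omega_2$ on a single reduction. For a list $\ell = [y_1,\ldots,y_n]$, the loop $\omega_1(\ell)$ is a composition $\mathsf{loop}(a_1)^{\epsilon_1}\ct\cdots\ct\mathsf{loop}(a_n)^{\epsilon_n}$ with $\epsilon_i = \pm 1$ depending on whether $y_i$ lies in the left or right summand of $A+A$. For a reduction $r : \ell \leadsto \ell'$ that removes a consecutive pair $(x,x^{-1})$, the term $\omega_2(r)$ is built from the groupoid identity $\mathsf{loop}(a)^{\epsilon}\ct\mathsf{loop}(a)^{-\epsilon} = \refl$ applied at the position of the redex, combined with associativity and a whiskering by the surrounding prefix and suffix.

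For Case 1 (full overlap), the two legs of the span are literally the same reduction $s$ and the extended cospan is empty. Hence $\spantocycle(\kappa) = s \cons s^{-1}$, and $\omega_2^{s*}(s \cons s^{-1}) = \omega_2(s) \ct \omega_2(s)^{-1} = \refl$ by the groupoid law. For Case 2 (partial overlap), the two reductions again land in the same list and the extended cospan is empty, so the cycle is $s \cons t^{-1}$ for two reductions $s, t : \ell \leadsto \ell'$. The task reduces to showing $\omega_2(s) = \omega_2(t)$. This follows from the observation that $\omega_1(\ell)$ contains a subexpression $\mathsf{loop}(a)^{\epsilon}\ct\mathsf{loop}(a)^{-\epsilon}\ct\mathsf{loop}(a)^{\epsilon}$, and cancelling the leftmost or the rightmost adjacent pair produces equal resulting paths, by associativity and cancellation. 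For Case 3 (no overlap), the extended cospan has one reduction on each side: one removing the second redex from $\ell_x$ and one removing the first redex from $\ell_y$. The cycle is thus a square, and the required equation states that the two ways of cancelling two disjoint adjacent pairs inside $\omega_1(\ell)$ agree; this is a direct associativity-and-cancellation calculation.

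The main obstacle is not the underlying identities, which are elementary groupoid facts in $F_A$, but the bookkeeping: tracking the position of each redex inside the list, its image as a subterm of $\omega_1$, and the whiskering of the cancellation paths by the surrounding prefix and suffix. In the informal write-up this is routine but tedious, which is precisely the reason the Lean formalisation handles these details explicitly.
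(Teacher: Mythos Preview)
Your proposal is correct and follows essentially the same three-case analysis as the paper's proof. The only notable difference is presentational: where you appeal to ``direct associativity-and-cancellation calculation'' and ``elementary groupoid facts'', the paper is explicit that the actual technique for discharging the 2-path equalities in Cases~2 and~3 is to factor $\omega_1$ through $\List(\mathsf{base}=\mathsf{base})$, generalise the loops $p,q$ to paths with free endpoints, and then apply path induction (after which everything becomes $\refl$); this is worth naming, since it is what makes the ``tedious bookkeeping'' tractable rather than open-ended.
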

\begin{proof}
%
 This lemma allows a proof of the (for HoTT typical) form "generalise and do path induction".
 Unfortunately, it is somewhat tedious.
 The function $\omega_1 : \List(A+A) \to F_A$, cf.\ \eqref{eq:omega1}, factors as
 \begin{equation} \label{eq:factor-omega1}
  \List(A+A) \longrightarrow \List(\mathsf{base} = \mathsf{base}) \longrightarrow \mathsf{base} = \mathsf{base}
 \end{equation}
 where the first map applies $\mathsf{loop}$ on every list element, while the second concatenates; note that $F_A \defeq (\mathsf{base} = \mathsf{base})$.
 The function $\omega_2$ \eqref{eq:omega2} can then be factored similarly.

 We treat the three cases in \cref{lem:fg-Noether-confl} separately. 
 The third case is the hardest:
 We need to show that, for a given list of the form $[\ldots, x, x^{-1}, \ldots, y, y^{-1}, \ldots]$, the two ways of simplifying the list get mapped to equal 2-paths between parallel 1-paths.
 Using \eqref{eq:factor-omega1}, we can instead assume that we are given a list of loops around $\mathsf{base}$.
 We first repeatedly use that associativity of path composition is coherent (we have ``MacLane's pentagon'' by trivial path induction).
 Then, we have to show that the two canonical ways of simplifying $e_1 \ct (p \ct p^{-1}) \ct e_2 \ct (q \ct q^{-1}) \ct e_3$ to $e_1 \ct e_2 \ct e_3$ are equal.
 The usual strategy in HoTT is now to generalise to the case that $p$ and $q$ are equalities with arbitrary endpoints rather than loops, and then do path induction.
If $p$ and $q$ are both $\refl$, then both simplifications become $\refl$ as well.
Instead of path induction, it seems possible to prove this lemma only using naturality and Eckmann-Hilton~\cite[Thm 2.6.1]{hott-book}.
 
 In the second case of \cref{lem:fg-Noether-confl}, we have to show that the two ways of reducing $e_1 \ct p \ct p^{-1} \ct p \ct e_2$ to $e_1 \ct p \ct e_2$ are equal.
 Again, if $p$ is $\refl$, this is automatic.
 The first case is trivial.
\end{proof}

\begin{lemma} \label{lem:retract}
 The free higher group $F_A$ is a retract of $\List(A+A) \sslash \leadsto$, in the sense that there is a map
 \begin{equation}
  \varphi : F_A \to (\List(A+A) \sslash \leadsto)
 \end{equation}
 such that $\omega \circ \varphi$ is the identity on $F_A$.
\end{lemma}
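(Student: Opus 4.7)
The plan is an encode-decode argument, with $\varphi$ playing the role of the encoding. I will define a type family $\C$ on $\hcolim(A \rightrightarrows \unit)$ whose fibre over $\mathsf{base}$ is $\List(A+A) \sslash \leadsto$, take $\varphi$ to be the corresponding transport of $\iota(\nil)$, and then verify $\omega \circ \varphi = \id_{F_A}$ via a dependent decoding procedure together with path induction.

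First, I would define $\C : \hcolim(A \rightrightarrows \unit) \to \UU$ by HIT-recursion, setting $\C(\mathsf{base}) \defeq \List(A+A) \sslash \leadsto$ and sending each generator $\mathsf{loop}(a)$ via univalence to the self-equivalence that appends $\inl(a)$ to a list. This operation descends to the quotient because $\leadsto$ is preserved under extending a list on the right, and it is an equivalence with inverse ``append $\inr(a)$'' because appending first $\inl(a)$ and then $\inr(a)$ (or vice versa) produces two consecutive inverse elements at the end, which $\leadsto$-reduce to the original list. Then set $\varphi(p) \defeq \transport^{\C}(p, \iota(\nil))$.

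To show $\omega \circ \varphi = \id_{F_A}$, I would construct a dependent decoding
\begin{equation}
 \Phi : \Pi(x : \hcolim(A \rightrightarrows \unit)). \C(x) \to (\mathsf{base} = x)
\end{equation}
by HIT-recursion, with $\Phi(\mathsf{base}, -) \defeq \omega$. The coherence condition for $\mathsf{loop}(a)$, once the transport over $x \mapsto (\C(x) \to (\mathsf{base} = x))$ is unfolded, reduces to the pointwise identity $\omega(\iota(l \cons [\inr(a)])) \ct \mathsf{loop}(a) = \omega(\iota(l))$, which holds because $\omega_1(l \cons [\inr(a)]) = \omega_1(l) \ct \mathsf{loop}(a)^{-1}$ by construction of $\omega_1$. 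Path induction on $p : \mathsf{base} = x$, generalising the right-hand endpoint, then yields $\Phi(x, \transport^{\C}(p, \iota(\nil))) = p$; the reflexivity case holds because $\Phi(\mathsf{base}, \iota(\nil)) = \omega_1(\nil) = \refl$. Specialising to $x \equiv \mathsf{base}$ gives the desired identity $\omega(\varphi(p)) = p$.

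The main obstacle is purely bureaucratic and concerns the loop constructor cases in the two HIT-recursions: verifying that ``append $\inl(a)$'' and ``append $\inr(a)$'' are well-defined at the level of the quotient and are mutually inverse there, and reducing the function-type transport that appears in the coherence condition for $\Phi$ to the pointwise equation above. None of this is conceptually difficult, but, as is usual for such encode-decode setups, the manipulations with transports and with the constructors of the untruncated quotient and the wedge of circles have to be carried out with some care.
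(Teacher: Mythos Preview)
Your encode--decode argument is correct and is, in effect, an inlining of the paper's approach. The paper invokes \cref{thm:lics2019-main} (the path-space induction principle for coequalisers) twice: once with the constant family $P \equiv \List(A+A) \sslash \leadsto$ to define $\varphi$ directly, and once more to verify that $\omega \circ \varphi$ is pointwise the identity. Your family $\C$ together with $\varphi(p) \defeq \transport^{\C}(p, \iota(\nil))$ is exactly what the first application unfolds to, and your decode $\Phi$ followed by path induction replaces the second. The paper's version is shorter on the page because the machinery is pre-packaged; yours is more self-contained but does the same work.

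One point deserves tightening. The $\mathsf{loop}(a)$-coherence for $\Phi$ is an equality of functions out of $\List(A+A) \sslash \leadsto$, not merely out of $\List(A+A)$; the ``pointwise identity'' you state is only the $\iota$-clause of the required coequaliser induction. You also owe the $\glue$-clause: a $2$-path in $F_A$ relating $\omega_2(s)$ for $s : l_1 \leadsto l_2$ to $\omega_2$ of the appended reduction $l_1 \cons [\inr(a)] \leadsto l_2 \cons [\inr(a)]$. This does go through---the cancellation witnessed by $\omega_2$ does not interact with a fresh symbol appended at the end---and it is precisely the ``care'' you allude to at the close, but it is a genuine step that should not be absorbed into the word ``reduces''. (The paper's second use of \cref{thm:lics2019-main} hides the analogous step inside the construction of its equivalence $e$.)
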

\begin{proof}
 For any $x : A+A$, the operation ``adding $x$ to a list''
 \begin{equation}
  (x \cons \_) \; : \; \List(A+A) \to \List(A+A)
 \end{equation}
 can be lifted to a function of type
 \begin{equation} \label{eq:endoeqv-on-lists}
  (\List(A+A) \sslash \leadsto) \to (\List(A+A) \sslash \leadsto).
 \end{equation}
 Moreover, the function \eqref{eq:endoeqv-on-lists} is inverse to $(x^{-1} \cons \_)$ and thus an equivalence.

 Let $\star$ be the unique element of the unit type $\unit$.
 We define the relation $\sim$ on the unit type by $(\star \sim \star) \defeq A$.
 Then, $\hcolim (A \rightrightarrows \unit)$ is by definition the coequaliser $(\unit \sslash \sim)$, and $F_A$ is given by $(\iota(\star) = \iota(\star))$.
 This allows us to define $\varphi$ using \cref{thm:lics2019-main} with the constant family $P \defeq (\List(A+A) \sslash \leadsto)$, with the equivalence of the component $e$ given by \eqref{eq:endoeqv-on-lists}.
 
 A further application of \cref{thm:lics2019-main} show that $\omega \circ \varphi$ is pointwise equal to the identity.
\end{proof}

\begin{proof}[Proof of \cref{thm:fundamental-group-trivial}]
 By \cite[Thms 7.2.9 and 7.3.12]{hott-book}, the statement of the theorem is equivalent to the claim that 
 $\trunc 1 {F_A}$
 is a set.
 
 We now consider the following diagram:
 
\begin{equation} \label{eq:diagram}
\begin{tikzpicture}[x=2.7cm,y=-2cm,baseline=(current bounding box.center)]
  \tikzset{arrow/.style={shorten >=0.1cm,shorten <=.1cm,-latex}}
\node (FA1) at (0,0) {$F_A$}; 
\node (Lquot) at (0,1) {$\List(A+A) \sslash \leadsto$}; 
\node (Lsquot) at (0,2) {$\List(A+A) \slash \leadsto$}; 
\node (FA2) at (1,1) {$F_A$}; 
\node (FAT) at (2,1) {$\trunc 1 {F_A}$}; 

\draw[arrow] (FA1) to node [right] {$\varphi$} (Lquot);
\draw[arrow] (FA1) to node [above right] {$\tproj 1 -$} (FAT);
\draw[arrow] (Lquot) to node [right] {$\tproj 0 -$} (Lsquot);
\draw[arrow] (Lquot) to node [above] {$\omega$} (FA2);
\draw[arrow] (FA2) to node [above] {$\tproj 1 -$} (FAT);
\draw[arrow,dashed] (Lsquot) to node [below right] {} (FAT);
\end{tikzpicture}
\end{equation}
 The dashed map exists by 
 the combination of \cref{thm:dirsetquot} (note that we are in the simplified case where the type to be quotiented is a set) together with \cref{lem:fg-cycle-condition} (and \cref{lem:simu-is-consec}).
 By construction, the bottom triangle commutes.
 The top triangle commutes by \cref{lem:retract}.
 
 Therefore, the map $\tproj 1 -$ factors through a set (namely $\List(A+A) \slash \leadsto$).
 This means that $\trunc 1 {F_A}$ is a retract of a set, and therefore itself a set.
\end{proof}

There are a number of deep problems that we can approach in a similar fashion.
%
Let us look at the following list, where the first question is the one discussed in the introduction:
\begin{enumerate}[(i)]
\item \label{item:1}
Is the free higher group on a set again a set?
\item \label{item:2}
Is the suspension of a set a $1$-type (open problem recorded in \cite[Ex 8.2]{hott-book})?
\item \label{item:3}
Given a $1$-type $B$ with a base point $b_0 : B$. If we add a single loop around $b_0$, it the type still a $1$-type?
\item \label{item:4}
Given $B$ and $b_0$ as above, imagine we add $M$-many loops around $b_0$ for some given set $M$. Is the resulting type still a $1$-type?
\item \label{item:5}
If we add a path (not necessarily a loop) to a $1$-type $B$, is the result still a $1$-type?
\item \label{item:6}
If we add an $M$-indexed family of paths to a $1$-type $B$ (for some set $M$), is the resulting type still a $1$-type?
\end{enumerate}
All questions are of the form:
\begin{quote}
 ``Can a change at level $1$
   induce a change at level $2$ or higher?''
\end{quote}
Only \ref{item:1} seems to be about level $0$ and $1$, but this is simply because we have taken a loop space.
With our \cref{thm:dirsetquot}, we can show an approximation for each of these questions analogously to \cref{thm:fundamental-group-trivial}.
This means that we show:
\begin{quote}
 ``A change at level $1$ does not induce a change at level $2$ (but we don't know about higher levels).''
\end{quote}

Fortunately, we do not have to discuss each problem separately as they have a common generalisation. 
To see this, assume we have a span $B \xleftarrow f A \xrightarrow g C$ of types and functions.
We then get the following pushout:
  \begin{equation} \label{eq:pushout}
  \begin{tikzpicture}[x=1.5cm,y=-1.0cm,baseline=(current bounding box.center)]
   \node (A) at (0,0) {$A$};
   \node (C) at (1,0) {$C$};
   \node (B) at (0,1) {$B$};
   \node (D) at (1,1) {$B +_A C$};
  
   \draw[->] (A) to node [left] {\scriptsize $f$} (B);
   \draw[->] (A) to node [above] {\scriptsize $g$} (C);
   \draw[->] (B) to node [above] {\scriptsize $i_0$} (D);
   \draw[->] (C) to node [left] {\scriptsize $i_1$} (D);
  \end{tikzpicture}
 \end{equation}
 Assume now that $A$ is a set while $B$ and $C$ are $1$-types. 
The above questions all ask whether $B +_A C$ is a $1$-type,
under the following additional assumptions:
\begin{enumerate}[(i)]
 \item Assuming that $B$, $C$ are both the unit type $\unit$ and $A$ is $A' + \unit$, where $A'$ is the set on which we want the free higher group (this is the usual translation from coequalisers to pushouts); 
 \item Assuming that $B$ and $C$ are both $\unit$;
 \item assuming that $A$ is $\unit$ and $C$ is the circle $\mathsf{S}^1$;
 \item assuming that $A$ is $\unit$ and $C$ is $M \times \mathsf{S}^1$;
 \item assuming that $A$ is the 2-element type $\bool$ and $C$ is $\unit$;
 \item assuming that $A$ is $M \times \bool$ and $C$ is $M$.
\end{enumerate}

With our tools, we can show an approximation of the general problem (and thus approximations of \ref{item:1} to \ref{item:6}):

\begin{theorem} \label{thm:pushout-1-type}
 Given a pushout as in \eqref{eq:pushout}, if $A$ is a set and $B$, $C$ are $1$-types, then all second homotopy groups of $B+_A C$ are trivial.
 In other words, $\trunc 2 {B+_A C}$ is a $1$-type.
\end{theorem}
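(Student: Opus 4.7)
The plan is to adapt the argument of \cref{thm:fundamental-group-trivial} one truncation level higher. To show $\|B+_AC\|_2$ is a $1$-type, it suffices to show that for every basepoint $x$ the type $\|\Omega(B+_AC, x)\|_1$ is a set, since this exactly expresses triviality of $\pi_2(B+_AC, x)$; by pushout induction (being a set is propositional) we may further assume $x = i_0(b_0)$ for some $b_0 : B$. Presenting the pushout as the coequaliser of the relation on $B+C$ generated by $\inl(f(a)) \sim \inr(g(a))$ makes \cref{thm:lics2019-main} and \cref{thm:dirsetquot} directly applicable.

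First I would define a set $L_{b_0}$ of combinatorial \emph{words} representing loops at $i_0(b_0)$: finite alternating sequences of $\tproj{0}{-}$-truncated paths in $B$ and $C$ interleaved with jump labels $a : A$, each encoding a transition $f(a) \leftrightarrow g(a)$, stitched so that the endpoints form a loop at $b_0$. Because $B$ and $C$ are $1$-types, their path spaces are sets, so $L_{b_0}$ really is a set. Equip it with a rewrite relation $\leadsto$ that cancels two consecutive opposite jumps sharing a label $a : A$ when the intermediate same-side path is reflexivity -- directly analogous to the cancellation of $(x, x^{-1})$ in \cref{ex:fg}. Noetherianness is immediate since each reduction strictly decreases word length, and local confluence is established by the same three-case critical-pair analysis as in \cref{lem:fg-Noether-confl} (fully overlapping, partially overlapping, or disjoint redexes).

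Next I would apply \cref{thm:dirsetquot} to the interpretation map $\omega : L_{b_0} \to \|\Omega(B+_AC, i_0(b_0))\|_1$ that sends a word to the composition of $\ap_{i_0}$-lifted $B$-paths, $\ap_{i_1}$-lifted $C$-paths, and $\glue(a_i)$ instances. The target is a $1$-type by construction and $L_{b_0}$ is a set, so condition \eqref{eq:dir-penu-comp} is automatic; the nontrivial content is the confluence-cycle condition \eqref{eq:dir-last-comp}, handled as in \cref{lem:fg-cycle-condition}: each critical pair is generalised and reduced by path induction on the relevant $B$- or $C$-path components to a pure $2$-path manipulation (associativity, $\glue(a) \ct \glue(a)^{-1} = \refl$, and pentagon/naturality) valid in any $1$-type. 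This descends $\omega$ to a map $L_{b_0}/\leadsto \;\to\; \|\Omega(B+_AC, i_0(b_0))\|_1$.

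Finally I would construct a section $\varphi$ of this map by the same retraction strategy as \cref{lem:retract}: apply \cref{thm:lics2019-main} with family $P$ indexed by the endpoint in $B+C$, with the component equivalence for each relation generator $s : \inl(f(a)) \sim \inr(g(a))$ given by ``append an $a$-jump'' on $L_{b_0}/\leadsto$ -- which is an equivalence because the opposite jump is its inverse modulo $\leadsto$. A second application of \cref{thm:lics2019-main} verifies that the composite with the descent map is pointwise reflexivity. Thus $\|\Omega(B+_AC, i_0(b_0))\|_1$ is a retract of the set $L_{b_0}/\leadsto$, hence itself a set, completing the proof. The main obstacle is making the word construction formally precise on the $1$-truncated path spaces of $B$ and $C$, and carrying out the cycle-condition verification of step three, which parallels \cref{lem:fg-cycle-condition} but carries substantially more book-keeping.
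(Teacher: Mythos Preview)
Your proposal is correct and follows essentially the same route as the paper's proof: the paper invokes precisely the Favonia--Shulman Seifert--van Kampen encoding $L_{x,x'}$ (lists alternating $A$-labels with paths in $B$ and $C$) with the reduction relation that contracts a pair of identical $A$-labels separated by $\refl$, verifies Noetherianness and local confluence by the same critical-pair analysis you outline, constructs $\omega$ and $\varphi$ via \cref{thm:lics2019-main}, and concludes that $\trunc 1 {i(x) = i(x')}$ is a retract of the set $L_{x,x'}/\leadsto$. One small point: your $\varphi$ step requires the family $P$ to vary over the \emph{endpoint}, so you cannot work only with the loop type $L_{b_0}$ but need the two-endpoint version $L_{b_0,x}$ (and its quotient) throughout; you allude to this when you say ``family $P$ indexed by the endpoint'', but the ``append an $a$-jump'' equivalence must then have type $L_{b_0,\inl(f(a))}/{\leadsto} \simeq L_{b_0,\inr(g(a))}/{\leadsto}$ rather than an endomorphism of $L_{b_0}/{\leadsto}$. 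The paper handles this by defining $L_{x,x'}$ for arbitrary $x,x' : B+C$ from the outset.
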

\begin{proof}[Proof sketch]
The argument is almost completely analogous to the proof of \cref{thm:fundamental-group-trivial}.
The main difference is that the type $\List(A+A)$ is not sufficient any more.
Instead, we need to be slightly more subtle when we encode the equalities in the pushout.
The following construction is due to Favonia and Shulman \cite{favonia:SvK}, who use it in their formulation of the \emph{Seifert-van Kampen Theorem}.

Given the square in \eqref{eq:pushout} and $b, b' : B$, we consider the type $L_{b,b'}$ of lists of the form
 \begin{equation} \label{eq:SvK-lists}
  [b, p_0, x_1, q_1, y_1, p_1, x_2, q_2, y_2, \ldots, y_n, p_n, b'] 
 \end{equation}
 where\footnote{We remove the $0$-truncations around the path spaces. These are without effect here since $B$, $C$ are $1$-types.}
 \begin{alignat}{3}
  & x_i : A & \qquad &  y_i : A \\
  & p_0 : b = f(x_1) && p_n : f(y_n) = b' \\
  & p_i : f(y_i) = f(x_{i+1}) && q_i : g(x_i) = g(y_i)
 \end{alignat}
The corresponding relation is generated by
 \begin{equation} \label{eq:SvK-relation}
 \begin{aligned}
  [\ldots, q_k, y_k, \refl, y_k, q_{k+1}, \ldots] \; & \leadsto \; [\ldots, q_k \ct q_{k+1}] \\
  [\ldots, p_k, x_k, \refl, x_k, p_{k+1}, \ldots] \; & \leadsto \; [\ldots, p_k \ct p_{k+1}] 
 \end{aligned}
 \end{equation}
The statement of the Seifert-van Kampen Theorem is that the set-quotient $L_{b,b'} \slash \leadsto$ is equivalent to the set-truncated type $\trunc 0 {i_0(b) = i_0(b')}$ of equalities in the pushout.
Similarly to $L_{b,b'}$, there are three further types of lists where one or both of the endpoints are in $C$ instead of $B$.
In general, we can define a type of lists $L_{x,x'}$ for $x,x' : B + C$, and the Seifert-van Kampen Theorem states that $L_{x,x'} \slash \leadsto$ is equivalent to $\trunc 0 {i(x) = i(x')}$, with $i : B + C \to B +_A C$ given by $(i_0, i_1)$.

Let us compare the lists of the type \eqref{eq:SvK-lists} with the lists $\List(A+A)$ that we have used before for special case \ref{item:1}.
In this special case, the elements $p_i$ and $q_i$ in \eqref{eq:SvK-lists} carry no information, and the right summand of $A' + \unit$ replaces the additional choice of ``left or right'' in $\List(A+A)$.
Under this transformation, the relations \eqref{eq:fg-relation} and \eqref{eq:SvK-relation} correspond to each other.

The proof of \cref{thm:pushout-1-type} then proceeds as follows.
The construction of $\omega$ and $\varphi$ is essentially the same as before, using the version of \cref{thm:lics2019-main} for pushouts available in \cite{KrausVonRaumer_pathSpaces}.
For the relation \eqref{eq:SvK-relation}, we can show the analogous to \cref{lem:fg-Noether-confl,lem:fg-cycle-condition}.
The analogous to \eqref{eq:diagram} is
\begin{equation} \label{eq:diagram2}
\begin{tikzpicture}[x=2.7cm,y=-2cm,baseline=(current bounding box.center)]
  \tikzset{arrow/.style={shorten >=0.1cm,shorten <=.1cm,-latex}}
\node (FA1) at (0,0) {$i(x) = i(x')$}; 
\node (Lquot) at (0,1) {$L_{x,x'} \sslash \leadsto$}; 
\node (Lsquot) at (0,2) {$L_{x,x'} \slash \leadsto$}; 
\node (FA2) at (1,1) {$i(x) = i(x')$}; 
\node (FAT) at (2,1) {$\trunc 1 {i(x) = i(x')}$}; 

\draw[arrow] (FA1) to node [right] {$\varphi$} (Lquot);
\draw[arrow] (FA1) to node [above right] {$\tproj 1 -$} (FAT);
\draw[arrow] (Lquot) to node [right] {$\tproj 0 -$} (Lsquot);
\draw[arrow] (Lquot) to node [above] {$\omega$} (FA2);
\draw[arrow] (FA2) to node [above] {$\tproj 1 -$} (FAT);
\draw[arrow,dashed] (Lsquot) to node [below right] {} (FAT);
\end{tikzpicture}
\end{equation}
There is a small subtlety: Since $A$ is a set and $B$, $C$ are $1$-types, the type of lists $L_{x,x'}$ is a set.
This is important since it allows us (as before) to use the simpler version of \cref{thm:dirsetquot}.
The above diagram shows that $\trunc 1 {i(x) = i(x')}$ is a set.
Choosing $x$ and $x'$ to be identical, this means that $\trunc 1 {\Omega(B+_A C,i(x))}$ is a set, which is equivalent to the statement that $\trunc 0 {\Omega^2(B+_A C,i(x))}$ (the second homotopy group) is trivial.
It follows by the usual induction principle of the pushout that $\trunc 0 {\Omega^2(B+_A C,z)}$ for arbitrary $z : B+_A C$ is trivial.
\end{proof}

\section{On Type Theory in Type Theory} \label{sec:tt-in-tt}

 There is a long series of suggestions to formalise dependent type theory inside dependent type theory, starting with Dybjer~\cite{dybjer1995internal}.
 Chapman~\cite{chapman2009type} introduced the expression ``type theory eats itself''
 for a formalisation of the type-theoretic syntax, inspired by Danielsson~\cite{danielsson2006formalisation} and others.
 Altenkirch and Kaposi~\cite{alt-kap:tt-in-tt} refer to the same concept as ``type theory in type theory''.
 Another recent suggestion was made by Escard{\'o} and Xu~\cite{autophagia}. 
 Abel, \"Ohmann, and Vezzosi~\cite{abel2017decidability} start their paper stating that ``type theory should be able to handle its own meta-theory''. 

 All proposals construct, in one way or another, types and type families of contexts, substitutions, types, and terms.
 They further define convertibility relations which capture the intended equalities, stating for example that $(\lambda x.x)(0)$ is related to $0$.
 It is then natural to quotient the constructed types by the relation.
Some authors who use homotopy type theory do this without truncating, which leads to a syntax that is not a set and therefore cannot have decidable equality.
Other authors ensure that the constructed syntax is set-truncated and has decidable equality~\cite{kaposi2017normalisation}.
 However, this approach leads to another problem which was discussed by Shulman~\cite{shulman:eating},  Altenkirch and Kaposi~\cite{alt-kap:tt-in-tt}, Kaposi and Kov{\'a}cs \cite{AAhiits}, and others in the community.
 The problem is that we want to ensure that the \emph{standard model} or \emph{meta-circular interpretation} is really a model of the syntax. This means that, among other components, we have to construct a function from the set of contexts to the universe of sets (or types). But this universe is not a set.

If we are happy to build the standard model out of sets (meaning that contexts are sets and types are families of sets), then it is a $1$-type.
This makes the results of this paper relevant.
As noted for example in the Agda implementation by Escard{\'o} and Xu~\cite{autophagia}, all required equations hold \emph{judgmentally} in the standard model. In other words, every $p : a \sim b$ gets interpreted as $\refl$.
But if every segment of a cycle gets mapped to $\refl$, one might expect that it is easy to prove that the whole cycle gets mapped to $\refl$ as required by \cref{thm:gensetquotnew}.
Unfortunately, this is not the case. The first statement is meta-theoretic, while the second is internal.
Although trivial for any \emph{concretely given} cycle, it is unclear to us how to directly prove that \emph{all} cycles get mapped to $\refl$.

While \cref{thm:gensetquotnew} seems to not be useful for this reason, we conjecture that a version of Noetherian cycle induction (\cref{thm:cycle-ind}) can be used. 
The relations we need in type theory are Noetherian and confluent, and it is very plausible that confluence cycles can be checked.
The main difficulty which needs to be overcome is that the authors \cite{alt-kap:tt-in-tt} do not simply first define types of expressions and then the relation, but both components mutually.
It seems necessary to adapt our main result accordingly.
We have not worked this out and leave it for the future.

\section{Final Remarks} \label{sec:concl}

Coherence of structures is a central concept in 
homotopy type theory.
In most cases, the difficult part is to \emph{express} 
coherence.
In the present paper, this part is easy: The coherence in question is already expressed correctly in the fairly simple \cref{thm:gensetquotnew}.
Instead, \emph{proving} or \emph{checking} the coherence of data that occurred in natural examples is the tricky part.


It is natural to ask what one can say about the higher homotopy groups of the free higher group over a set.
It may very well be possible to show ``higher'' versions of \cref{thm:dirsetquot} which relax the condition of $1$-truncatedness if $X$ to, for example, $2$-truncatedness.
The expectation is that this would require coherence for the proofs that one needs for confluence cycles.

We do not believe that the conditions on $A$, $B$, and $C$ in \cref{thm:pushout-1-type} can be relaxed any further.
If $A$ is the non-set $\mathsf{S}^1$ we get a counterexample, since the suspension of the circle is the sphere which does not have trivial higher homotopy groups.
If $B$ and $C$ are not required to be $1$-truncated, Paolo Capriotti has pointed out to us that the pushout of the span
\begin{equation}
 \trunc 2 {\mathsf{S}^2} \leftarrow \unit \rightarrow \trunc 2 {\mathsf{S}^2}
\end{equation}
is not a $2$-type.
Thus, \cref{thm:pushout-1-type} is in some sense maximally general.

Finally, we hope that our main construction has applications outside of type theory. 
The condition that ``every cycle becomes trivial'' makes sense in graph rewriting, although (as far as we are aware) it has not been considered in the context of well-founded relations.
As Christian Sattler has pointed out, there is a connection to van Kampen pushouts; for example, in the category of sets, a pushout is van Kampen if every cycle is trivial \cite[Def 21]{lowe2010van}.


\subsection*{Acknowledgments}
We would like to thank Vincent van Oostrom for in-depth comments on this paper, for very detailed explanations on rewriting, and for exciting ideas that could extend this line of work. 

For helpful comments on this work, we thank the participants of the \emph{FAUM} meeting (Herrsching 2019) and \emph{Types in Munich} (online 2020), as well as the members of the theory groups in Birmingham, Budapest, and Nottingham.
In particular, we acknowledge the remarks by Steve Awodey, Ulrik Buchholtz, Thierry Coquand, Eric Finster, Mart{\'i}n H{\"o}tzel Escard{\'o}, Egbert Rijke, Anders M\"ortberg, and Chuangjie Xu. We are especially grateful for the discussions with Christian Sattler.

This work has been supported by the Royal Society under grant No.~URF\textbackslash R1\textbackslash 191055.

\bibliographystyle{plain}
\bibliography{master}

\end{document}